\documentclass[11pt,a4paper]{article}

\usepackage[a4paper,margin=21mm,headheight=14pt,footskip=11mm]{geometry}
\usepackage[T1]{fontenc}
\usepackage[utf8]{inputenc}
\usepackage{lmodern}
\usepackage{microtype}
\usepackage{amsmath,amssymb,amsthm,mathtools}
\usepackage{booktabs,array,enumitem}
\usepackage{tabularx}
\usepackage{graphicx,listings,float}
\usepackage[dvipsnames,table]{xcolor}
\usepackage{tikz}
\usetikzlibrary{arrows.meta,calc,positioning,matrix,decorations.pathreplacing}
\usepackage{hyperref}
\definecolor{ClockBlue}{HTML}{315F7D}
\definecolor{ClockGreen}{HTML}{187565}
\definecolor{ClockGold}{HTML}{B47A1F}
\definecolor{ClockRed}{HTML}{A34A43}
\definecolor{SoftBlue}{HTML}{EEF4F7}
\definecolor{SoftGreen}{HTML}{EDF6F3}
\definecolor{SoftGold}{HTML}{FBF4E7}
\definecolor{SoftRed}{HTML}{FAEFED}
\definecolor{Ink}{HTML}{25333A}
\definecolor{PaleGray}{HTML}{F5F6F6}
\definecolor{PrimeGray}{HTML}{ECEEEF}
\definecolor{StepBlue}{RGB}{66,132,176}
\definecolor{StepViolet}{RGB}{119,94,164}
\definecolor{StepRose}{RGB}{183,94,124}

\lstdefinestyle{clockpython}{
  language=Python,
  basicstyle=\ttfamily\scriptsize,
  keywordstyle=\bfseries\color{ClockBlue},
  commentstyle=\itshape\color{ClockGreen},
  stringstyle=\color{ClockRed},
  numberstyle=\tiny\color{Ink!60},
  numbers=left,
  numbersep=6pt,
  showstringspaces=false,
  keepspaces=true,
  columns=fullflexible,
  frame=single,
  framerule=0.55pt,
  rulecolor=\color{ClockBlue!55},
  backgroundcolor=\color{PaleGray},
  xleftmargin=5mm,
  framexleftmargin=3mm,
  aboveskip=3pt,
  belowskip=3pt
}

\lstdefinestyle{clocklisp}{
  language=Lisp,
  basicstyle=\ttfamily\scriptsize,
  keywordstyle=\bfseries\color{ClockBlue},
  commentstyle=\itshape\color{ClockGreen},
  stringstyle=\color{ClockRed},
  numberstyle=\tiny\color{Ink!60},
  numbers=left,
  numbersep=6pt,
  showstringspaces=false,
  keepspaces=true,
  columns=fullflexible,
  frame=single,
  framerule=0.55pt,
  rulecolor=\color{ClockGreen!65},
  backgroundcolor=\color{SoftGreen!45},
  xleftmargin=5mm,
  framexleftmargin=3mm,
  aboveskip=3pt,
  belowskip=3pt
}

\hypersetup{
  colorlinks=true,
  linkcolor=ClockBlue,
  citecolor=ClockGreen,
  urlcolor=ClockBlue,
  pdftitle={The Prime Clockwork: Modular and Multiplicative Arithmetic as a Discrete Dynamical System},
  pdfauthor={Michael T.M. Emmerich}
}

\setlist[itemize]{leftmargin=5.5mm,itemsep=2pt,topsep=3pt}
\setlist[enumerate]{leftmargin=6.5mm,itemsep=2pt,topsep=3pt}
\renewcommand{\arraystretch}{1.14}

\newtheoremstyle{clockplain}%
  {5pt}{5pt}{\itshape}{}%
  {\bfseries\color{ClockGreen}}{.}{0.5em}{}
\theoremstyle{clockplain}
\newtheorem{proposition}{Proposition}
\newtheorem{theorem}[proposition]{Theorem}
\newtheorem{lemma}[proposition]{Lemma}
\newtheorem{corollary}[proposition]{Corollary}
\theoremstyle{definition}
\newtheorem{definition}[proposition]{Definition}
\theoremstyle{remark}
\newtheorem{remark}[proposition]{Remark}

\newcommand{\Q}{\mathbb Q}
\newcommand{\Z}{\mathbb Z}
\newcommand{\N}{\mathbb N}

\newcommand{\Pset}{\mathcal P}
\newcommand{\G}{\mathcal G}
\newcommand{\V}{\mathbf V}
\newcommand{\VQ}{\mathbf V_{\!\Q}}
\newcommand{\vp}{\nu_p}
\newcommand{\lcm}{\operatorname{lcm}}

\title{\textbf{The Prime Clockwork: A Dynamic Representation of Modular and Multiplicative Arithmetic}}
\author{Michael T.M. Emmerich\\[-1mm]
\small Faculty of Information Technology, University of Jyv\"askyl\"a, Finland}
\date{September 2026}

\begin{document}
\maketitle
\vspace{-4mm}

\begin{abstract}
The way numbers are represented strongly influences which arithmetic structures are easy to see. The prime clockwork is a recursively growing discrete dynamical system: a list of autonomous two-hand clocks driven by one common $+1$ signal. No primes or primality labels are supplied. Starting empty, the process appends a clock of period $n$ whenever none already present rings; the primes are generated internally as its growth times. For each installed prime $p$, the seconds reading $R_p$ advances through $0,\ldots,p-1$, and each return to zero increments the minutes reading $M_p$, which counts completed $p$-cycles. The hands use only increment, comparison, reset, and carry, without explicit mod or div operations.

At time $n$, $n=pM_p(n)+R_p(n)$. The valuation readout $V_p(n)=\nu_p(n)$ is generated locally: it is zero when the $p$-clock does not ring; when it rings, the readout is one plus the earlier valuation indexed by the current minutes reading. The valuation trajectory has no repeated row; this reconstructs each integer in unique prime-factorized form. Its coordinates add and subtract under multiplication and division, representing every positive rational uniquely; divisibility becomes weak componentwise order, and unique factorization is natural in this representation. Finite seconds arrays form Cartesian-product state spaces whose common orbit visits every joint state once before repeating; this grand cycle is the order-sensitive dynamical counterpart of the Chinese remainder theorem. The same coordinates expose gcd, lcm, perfect powers, B'ezout's identity, and Euler's totient. Rational valuation levels reach certain positive algebraic irrationalities, but not algebraic numbers in general.
\end{abstract}

\textbf{Keywords.} modular arithmetic; discrete dynamical systems; Chinese remainder theorem; prime generation; prime vectors; p-adic valuations; unique factorization; Euler totient; visual mathematics.

\begin{flushright}
\begin{minipage}{0.86\textwidth}
\small\itshape
``We are not trying to meet some abstract production quota of definitions, theorems and proofs. The measure of our success is whether what we do enables people to understand and think more clearly and effectively about mathematics.''
\par\smallskip
\raggedleft\normalfont---William P. Thurston, \emph{On Proof and Progress in Mathematics}~\cite[p.~163]{Thurston1994}
\end{minipage}
\end{flushright}
\vspace{1mm}

\section{Introduction}

Elementary number theory is usually presented through several complementary languages: divisibility, congruences, prime factorization, arithmetic functions, and valuations. The prime clockwork asks whether these familiar structures can be organized in one prime-indexed representation before the usual algebraic machinery is brought fully into view.

The construction is intentionally elementary. It begins with no supplied primes and advances only by a common $+1$ signal. Whenever none of the existing clocks rings, the present integer becomes the period of a newly appended clock; the primes are therefore outputs of the recursion rather than parameters given to it. Each complete $p$-clock autonomously updates two hands without consulting any other prime clock: a cyclic seconds reading and an integer minutes reading that counts completed $p$-cycles. The $p$-adic valuation is a third, derived readout, computed at ring times from the minutes address and that clock's own earlier valuation history. Globally, however, unique factorization makes the complete valuation vector sufficient to recover the represented integer, every local clock state, and the unique successor state. Remark~\ref{rem:global-reconstruction} separates this global determinacy from the direct local updateability of the maintained clock hands. This suggests the language of a discrete dynamical system: there are states, update maps, periods, return times, products of state spaces, hidden variables, and observable events such as a return to a distinguished position.

The motivation is fourfold. First, it is an exploration of number representation: changing coordinates can make different structures visible. Second, it is didactic: the aim is a visual and intuitive route into modular and multiplicative number theory. Third, the clock language suggests alternative proofs of classical facts by using cycles, counting, and state transitions. Fourth, the construction provides a modest bridge to discrete dynamical systems without replacing standard number-theoretic notation.

More specifically, the contribution is a single recursive description in which prime periods are generated rather than supplied, seconds and minutes states are advanced clock by clock without explicitly invoking \texttt{mod} or \texttt{div}, valuations are recovered by local history lookup, their rows are proved non-repeating by descent through minutes addresses, and their coordinates turn divisibility into weak componentwise order and division into subtraction.

The metaphor is kept literal. For each prime $p$, the \emph{seconds counter} advances one position per unit tick and resets after $p$ positions. Every reset carries one unit to the \emph{minutes counter}, which records how many $p$-cycles have been completed. The valuation $\nu_p$ is not the minutes reading itself: it is obtained from the ring event, the current minutes value, and the clock's previously generated valuation readings. The two hands form an autonomous successor system; the valuation is its history-dependent multiplicative observation. This distinction will remain in force throughout the paper.

The exposition proceeds from the basic construction to integers, then to rationals and a limited class of algebraic numbers. The closing sections collect what the representation makes visible and position the viewpoint relative to standard theory and earlier clock- and sieve-based expositions.

\section{The basic clockwork}

The prime clockwork is easiest to understand first as an array of autonomous two-hand clocks. Each clock has its own prime period, but all clocks receive the same unit tick and no clock drives another. Its seconds counter advances by cyclic successor; each return to zero adds one to its minutes counter. The valuation readout is then generated from the ring event, the minutes address, and the clock's own stored valuation history. Thus the two hands update autonomously prime by prime, while the valuation is a local history-dependent output. At any finite stage the maintained seconds state is a point in a Cartesian product of finite cyclic state spaces.

\begin{figure}[ht]
\centering
\begin{tikzpicture}[font=\small,>=Latex]
  \def\R{0.66}
  \foreach \x/\p/\ang in {-5.5/2/270,-2.75/3/-30,0/5/-126,2.75/7/141.43,5.5/11/24.55}{
    \begin{scope}[shift={(\x,0.35)}]
      \draw[ClockBlue,very thick] (0,0) circle[radius=\R];
      \pgfmathtruncatemacro{\last}{\p-1}
      \foreach \j in {0,...,\last}{
        \fill[ClockBlue!65] ({90-360/\p*\j}:\R) circle[radius=.75mm];
      }
      \draw[-{Latex[length=1.7mm]},very thick,ClockGold] (0,0)--(\ang:{0.52});
      \node[font=\bfseries,ClockBlue] at (0,1.02) {$\mathcal C_{\p}$};
      \node[font=\scriptsize] at (0,-1.00) {period $\p$};
    \end{scope}
  }
  \node[font=\large,ClockBlue] at (7.0,0.35) {$\cdots$};
  \draw[-{Latex[length=3mm]},ClockGreen,very thick] (-6.25,-1.15)--(6.25,-1.15);
  \node[ClockGreen] at (0,-1.47) {one common tick; autonomous updates on $2,3,5,7,11,\ldots$};
\end{tikzpicture}
\caption{The autonomous prime-clock array. The clocks $2,3,5,7,11,\ldots$ share a common time parameter, but each updates its seconds and minutes readings independently of the other prime clocks. Only the cyclic seconds hand is displayed; its completed turns are retained by the integer minutes counter.}
\label{fig:autonomous-array}
\end{figure}

For an integer period $p\ge2$, let $\mathcal C_p$ have a seconds counter $R_p\in\{0,1,\ldots,p-1\}$ and an integer minutes counter $M_p\in\mathbb Z_{\ge0}$. Starting from $(R_p(0),M_p(0))=(0,0)$, one unit tick applies the carry update
\begin{equation}
T_p(r,m)=
\begin{cases}
(r+1,m),&r<p-1,\\
(0,m+1),&r=p-1.
\end{cases}
\label{eq:two-hand-update}
\end{equation}
A clock \emph{rings} on the second branch, when its seconds counter returns to $0$. Induction on $n$ gives
\begin{equation}
\boxed{n=pM_p(n)+R_p(n)},
\qquad
R_p(n)=n\bmod p,
\qquad
M_p(n)=\left\lfloor\frac np\right\rfloor.
\label{eq:clock-division}
\end{equation}
The quotient and remainder notation describes the generated readings; the update~\eqref{eq:two-hand-update} itself uses only increment, comparison, reset, and carry. Formally, the full prime-indexed seconds vector has the natural zero initial state
\begin{equation}
\mathbf R(0)=(0,0,\ldots),
\qquad
\mathbf R(n+1)=T(\mathbf R(n)),
\label{eq:seconds-recursion}
\end{equation}
where $T$ advances every active seconds coordinate by one with its own carry rule. The online recursion below realizes this same trajectory lazily: a $p$-clock is instantiated only when $p$ is discovered, in the state $(R_p(p),M_p(p))=(0,1)$ that it would have reached after running from time zero.

The multiplicative reading attached to a prime is the nonnegative integer
\begin{equation}
V_p(n):=\nu_p(n):=\max\{e\in\mathbb Z_{\ge0}:p^e\mid n\},
\label{eq:valuation-readout}
\end{equation}
called the \emph{valuation readout}. We write
\[
\V(n)=(V_2(n),V_3(n),V_5(n),\ldots),
\qquad \V(1)=\mathbf0.
\]
The valuation trajectory is generated locally from the two clock hands. With $V_p(j)=0$ for $1\le j<p$,
\begin{equation}
V_p(n)=
\begin{cases}
0, & R_p(n)\ne0,\\[1mm]
1+V_p(M_p(n)), & R_p(n)=0,
\end{cases}
\qquad n>1.
\label{eq:valuation-recursion}
\end{equation}
Indeed, at a ring equation~\eqref{eq:clock-division} gives $n=pM_p(n)$, so $\nu_p(n)=1+\nu_p(M_p(n))$; away from a ring, $p\nmid n$ and the valuation is zero. Since $M_p(n)<n$, the ring branch refers only to an earlier row. Each $p$-clock can therefore store its own valuation history $H_p(j)=V_p(j)$ and, at a ring, read $H_p(M_p(n))$ and add one. The $+1$ is applied to the value at the minutes address, not to the valuation at the preceding ring. Appendix~\ref{app:python} implements this local lookup directly.

For example, the $3$-clock has the following selected ring states:
\[
\begin{array}{c|cccc}
n&3&6&9&27\\ \hline
(R_3(n),M_3(n))&(0,1)&(0,2)&(0,3)&(0,9)\\
V_3(M_3(n))&0&0&1&2\\
V_3(n)&1&1&2&3
\end{array}
\]
Thus minutes count completed turns in the ordinary clock sense, whereas valuations measure recursively nested divisibility by $p$.

\subsection{The unit-step recursion}
\label{sec:recursive-primes}

The autonomous realization does not begin with a supplied list of primes. At positive time $1$ no clocks are installed. To pass from time $n$ to time $n+1$, apply the following deterministic update.

\begin{enumerate}[label=\arabic*.,leftmargin=8mm]
\item \textbf{Advance the two hands.} Apply~\eqref{eq:two-hand-update} to every installed clock, carrying one unit to $M_p$ whenever $R_p$ returns to $0$.
\item \textbf{Grow the array.} If $n+1>1$ and none of the installed seconds counters is at $0$, install a new clock $\mathcal C_{n+1}$ in state $(R_{n+1},M_{n+1})=(0,1)$.
\item \textbf{Generate valuations.} Each active $p$-clock sets $V_p(n+1)$ from~\eqref{eq:valuation-recursion} using its own stored history; coordinates not yet installed are suppressed dynamically and are retrospectively $0$ in the static valuation vector.
\end{enumerate}

In compact form,
\begin{equation}
\boxed{\text{tick with carry}\;\longrightarrow\;\text{inspect zero alarms}\;\longrightarrow\;\text{grow if none rings}\;\longrightarrow\;\text{read valuations}.}
\label{eq:recursive-install}
\end{equation}
This is the prime clockwork as a recursively growing discrete dynamical system. Each clock autonomously generates its paired seconds and minutes trajectory, and its local valuation history turns those two readings into the multiplicative depth $V_p(n)$. The clock hands are the maintained dynamical state; the valuation is the synchronized derived observation.

The first few steps make the distinction visible. Dots mean that a clock has not yet been installed in the online recursion. Soft gray shading is used only for prime rows, since these are precisely the growth events of the recursion.

\begin{figure}[H]
\centering
\begin{minipage}[t]{0.485\textwidth}
\centering
\textbf{Seconds trajectory $\mathbf R(n)$}\\[1mm]
\scriptsize
\setlength{\tabcolsep}{2.5pt}
\renewcommand{\arraystretch}{1.10}
\begin{tabularx}{\linewidth}{@{}c*{6}{>{\centering\arraybackslash}X}@{}}
\toprule
$n$ & $2$ & $3$ & $5$ & $7$ & $11$ & $13$\\
\midrule
1 & $\cdot$ & $\cdot$ & $\cdot$ & $\cdot$ & $\cdot$ & $\cdot$\\
\rowcolor{PrimeGray}\textbf{2} & 0 & $\cdot$ & $\cdot$ & $\cdot$ & $\cdot$ & $\cdot$\\
\rowcolor{PrimeGray}\textbf{3} & 1 & 0 & $\cdot$ & $\cdot$ & $\cdot$ & $\cdot$\\
4 & 0 & 1 & $\cdot$ & $\cdot$ & $\cdot$ & $\cdot$\\
\rowcolor{PrimeGray}\textbf{5} & 1 & 2 & 0 & $\cdot$ & $\cdot$ & $\cdot$\\
\addlinespace[1.3pt]
6 & 0 & 0 & 1 & $\cdot$ & $\cdot$ & $\cdot$\\
\rowcolor{PrimeGray}\textbf{7} & 1 & 1 & 2 & 0 & $\cdot$ & $\cdot$\\
8 & 0 & 2 & 3 & 1 & $\cdot$ & $\cdot$\\
9 & 1 & 0 & 4 & 2 & $\cdot$ & $\cdot$\\
10 & 0 & 1 & 0 & 3 & $\cdot$ & $\cdot$\\
\addlinespace[1.3pt]
\rowcolor{PrimeGray}\textbf{11} & 1 & 2 & 1 & 4 & 0 & $\cdot$\\
12 & 0 & 0 & 2 & 5 & 1 & $\cdot$\\
\rowcolor{PrimeGray}\textbf{13} & 1 & 1 & 3 & 6 & 2 & 0\\
14 & 0 & 2 & 4 & 0 & 3 & 1\\
15 & 1 & 0 & 0 & 1 & 4 & 2\\
\bottomrule
\end{tabularx}
\end{minipage}\hfill
\begin{minipage}[t]{0.485\textwidth}
\centering
\textbf{Valuation trajectory $\V(n)$}\\[1mm]
\scriptsize
\setlength{\tabcolsep}{2.5pt}
\renewcommand{\arraystretch}{1.10}
\begin{tabularx}{\linewidth}{@{}c*{6}{>{\centering\arraybackslash}X}@{}}
\toprule
$n$ & $2$ & $3$ & $5$ & $7$ & $11$ & $13$\\
\midrule
1 & $\cdot$ & $\cdot$ & $\cdot$ & $\cdot$ & $\cdot$ & $\cdot$\\
\rowcolor{PrimeGray}\textbf{2} & 1 & $\cdot$ & $\cdot$ & $\cdot$ & $\cdot$ & $\cdot$\\
\rowcolor{PrimeGray}\textbf{3} & 0 & 1 & $\cdot$ & $\cdot$ & $\cdot$ & $\cdot$\\
4 & 2 & 0 & $\cdot$ & $\cdot$ & $\cdot$ & $\cdot$\\
\rowcolor{PrimeGray}\textbf{5} & 0 & 0 & 1 & $\cdot$ & $\cdot$ & $\cdot$\\
\addlinespace[1.3pt]
6 & 1 & 1 & 0 & $\cdot$ & $\cdot$ & $\cdot$\\
\rowcolor{PrimeGray}\textbf{7} & 0 & 0 & 0 & 1 & $\cdot$ & $\cdot$\\
8 & 3 & 0 & 0 & 0 & $\cdot$ & $\cdot$\\
9 & 0 & 2 & 0 & 0 & $\cdot$ & $\cdot$\\
10 & 1 & 0 & 1 & 0 & $\cdot$ & $\cdot$\\
\addlinespace[1.3pt]
\rowcolor{PrimeGray}\textbf{11} & 0 & 0 & 0 & 0 & 1 & $\cdot$\\
12 & 2 & 1 & 0 & 0 & 0 & $\cdot$\\
\rowcolor{PrimeGray}\textbf{13} & 0 & 0 & 0 & 0 & 0 & 1\\
14 & 1 & 0 & 0 & 1 & 0 & 0\\
15 & 0 & 1 & 1 & 0 & 0 & 0\\
\bottomrule
\end{tabularx}
\end{minipage}
\caption{The first fifteen positive integers as a recursively generated trajectory. Left: the finite cyclic seconds readings. Right: the derived nonnegative valuation readings. A newly discovered prime appears with clock state $(R_p,M_p)=(0,1)$ and valuation $V_p=1$. Before installation its valuation coordinate is dynamically absent; in the completed static representation it is zero.}
\label{fig:first-fifteen-trajectories}
\end{figure}

\begin{lemma}[Every composite has a smaller prime factor]
\label{lem:smaller-prime-factor}
If $n>1$ is composite, then some prime $p<n$ is a factor of $n$.
\end{lemma}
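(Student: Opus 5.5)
The plan is a well-ordering (least divisor) argument. Since $n$ is composite, it has a divisor $d$ with $1<d<n$. Among all divisors of $n$ strictly greater than $1$, choose the least one and call it $p$. This set is nonempty, since it contains $d$ and $n$ itself, so $p$ exists. Because $d<n$ lies in the set, we get $p\le d<n$.

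Next we show that $p$ is prime. Suppose instead that $p$ had a divisor $e$ with $1<e<p$. Transitivity of divisibility ($e\mid p$ and $p\mid n$) gives $e\mid n$. Then $e$ would be a divisor of $n$ greater than $1$ and smaller than $p$, contradicting the minimality of $p$. Hence the only positive divisors of $p$ are $1$ and $p$, and since $p>1$, $p$ is prime.

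No step is a real obstacle. The only point needing care is to state clearly the definitions of prime and composite that the paper uses. Composite means having a divisor strictly between $1$ and $n$, which is what makes the bound $p\le d<n$ strict. In the paper's clockwork language, the conclusion says that at a composite time $n$ some already-installed clock $\mathcal C_p$ with $p<n$ rings, since $R_p(n)=0$ by~\eqref{eq:clock-division}. This is the observation the growth rule of Section~\ref{sec:recursive-primes} will rely on, so that no composite is ever installed as a new period. The lemma itself, however, is purely arithmetic and needs no clock dynamics.
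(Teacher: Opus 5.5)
Your proof is correct and is essentially the paper's argument: take the least divisor of $n$ greater than $1$, use minimality together with transitivity of divisibility to show it is prime, and use compositeness to get the strict bound $p<n$. You add small details the paper leaves implicit, such as nonemptiness of the divisor set and the explicit chain $p\le d<n$, but the route is the same.
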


\begin{proof}
Among the factors $d>1$ of $n$, choose the least. If $d$ were composite, say $d=ab$ with $1<a<d$, then $a$ would also divide $n$, contradicting the minimality of $d$. Hence $d$ is prime; since $n$ is composite, $d<n$.
\end{proof}

\begin{proposition}[The recursion discovers exactly the primes]
\label{prop:prime-discovery}
At time $n>1$, the clockwork installs a new clock if and only if $n$ is prime.
\end{proposition}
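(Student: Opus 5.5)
I will prove the statement by strong induction on $n\ge2$, using as the inductive hypothesis the joint claim that \emph{(a)} for every $2\le m<n$ a clock was installed at time $m$ exactly when $m$ is prime, and \emph{(b)} every installed clock $\mathcal C_p$ has, at every time $t\ge p$, the readings $R_p(t)=t\bmod p$ and $M_p(t)=\lfloor t/p\rfloor$. Part (b) is what lets the ring condition be translated into divisibility. It follows from the carry update~\eqref{eq:two-hand-update} and the installation state $(R_p(p),M_p(p))=(0,1)$: this is exactly the state reached by running from time zero, so the induction giving~\eqref{eq:clock-division} applies verbatim from time $p$ onward. With (a) and (b), the set of clocks present at time $n$ (after the advance step, before the growth step) is exactly $\{\mathcal C_p: p \text{ prime},\ p<n\}$. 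The clock $\mathcal C_p$ rings, i.e.\ has seconds counter $0$, precisely when $p\mid n$.

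The growth rule says a clock is installed at time $n$ iff no installed seconds counter is $0$. By the previous paragraph this holds iff no prime $p<n$ divides $n$. Two directions remain. If $n$ is prime, no prime $p<n$ divides $n$, since the only divisors of $n$ exceeding $1$ are $n$ itself; so nothing rings and $\mathcal C_n$ is installed. If $n$ is composite, Lemma~\ref{lem:smaller-prime-factor} gives a prime $p<n$ with $p\mid n$. By (a) that clock is installed, and by (b) its seconds counter is $0$, so no clock is installed. This establishes (a) at $n$. Part (b) for the new clock, and its continuation for old clocks, is immediate from the update rule, which closes the induction. The base case $n=2$ is the instance with an empty array, where the condition ``none rings'' holds vacuously and $2$ is prime.

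The main subtlety, rather than a real obstacle, is bookkeeping about order of operations and indexing. The hands must be advanced \emph{before} the inspection at time $n$, so that the condition tested is $R_p(n)=0$ and not $R_p(n-1)=0$. A clock installed at time $n$ must not count as ringing in its own inspection, since it is installed only after the inspection. Finally, the lazily instantiated clock must agree with the from-zero trajectory; I would state this explicitly as the reason~\eqref{eq:clock-division} holds for installed clocks. Once these points are fixed, the argument is just the equivalence ``some installed clock rings $\iff$ $n$ has a prime factor $<n$ $\iff$ $n$ is composite.''
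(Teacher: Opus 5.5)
Your proof is correct and takes essentially the same route as the paper's: induction on $n$, using that an installed $p$-clock rings exactly when $p\mid n$, handling the prime case directly and the composite case via Lemma~\ref{lem:smaller-prime-factor}. The only difference is that you put the identification $R_p(t)=t\bmod p$ for lazily installed clocks into the inductive hypothesis, whereas the paper takes it from~\eqref{eq:clock-division} and the lazy-instantiation remark before the proof.
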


\begin{proof}
Proceed by induction on $n$. Assume that before time $n$ the installed periods are exactly the primes below $n$. An installed $p$-clock rings at time $n$ exactly when $p\mid n$.

If $n$ is prime, no smaller installed prime divides it, so no seconds counter rings and $\mathcal C_n$ is installed. If $n$ is composite, Lemma~\ref{lem:smaller-prime-factor} gives a prime factor $p<n$. By induction $\mathcal C_p$ is already present, and its seconds counter rings at time $n$; hence no new clock is installed. The base case $n=2$ is immediate.
\end{proof}

The first growth events are therefore
\[
2,3,5,7,11,13,\ldots,
\]
but this list is output, not input. The concise Common Lisp program in Appendix~\ref{app:python} implements this online growth rule.

\section{Integers on the clockwork}

For positive integers, the seconds counters display congruence and divisibility alarms, the minutes counters record completed prime cycles, and the derived valuation readouts display prime multiplicity. This section develops the main integer results in that order: first the finite product dynamics, then factorization, and finally elementary arithmetic in the valuation coordinates.

\subsection{Grand cycles from Cartesian products}
\label{sec:grand-cycles}

After finitely many growth events, suppose the discovered periods are $p_1,\ldots,p_k$.  The state space of this finite stage is the Cartesian product
\begin{equation}
X_k=\mathcal C_{p_1}\times\cdots\times\mathcal C_{p_k},
\qquad
|X_k|=p_1p_2\cdots p_k.
\label{eq:cartesian-state-space}
\end{equation}
The common tick advances every component by one.  Before interpreting these positions as residue classes, we can determine the orbit directly from the cyclic clock geometry.  This is the ``grand cycle'' viewpoint developed informally in an earlier Mathematical Playground essay~\cite{EmmerichGrandCycles2024}.

\begin{lemma}[Nonzero rotation of a prime clock]
\label{lem:prime-rotation}
Let $p$ be prime.  Repeatedly rotate a $p$-clock by a fixed nonzero number of positions.  Starting from any position, the successive rotations visit all $p$ positions exactly once before returning.
\end{lemma}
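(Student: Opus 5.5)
We model the $p$-clock positions as $\{0,1,\ldots,p-1\}$ and a rotation by $s$ positions, with $1\le s\le p-1$, as the map $x\mapsto x+s$ taken cyclically. Starting from $x_0$, the $j$-th position visited is $x_0+js$ reduced cyclically. The plan has two parts: show the orbit returns to $x_0$ after exactly $p$ rotations, and show no position repeats earlier. Together these mean the first $p$ rotations visit $p$ distinct positions, which is all of them, each exactly once.

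The return after $p$ steps is immediate. Rotating $p$ times by $s$ is the same as rotating $s$ times by a full turn of $p$ positions, which brings every position back to itself.

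The key step is the absence of an earlier repeat. Suppose positions $j$ and $j'$ coincide, with $0\le j<j'\le p-1$. Then rotating $d=j'-j$ times by $s$ returns the clock to its starting position, where $1\le d\le p-1$. In clock language, $ds$ total positions make a whole number of full turns, so $p\mid ds$. Since $p$ is prime, $p\mid d$ or $p\mid s$. Both are impossible, because $0<d,s<p$.

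The main obstacle is justifying this divisibility step without circularly assuming unique factorization, which the paper later derives from the clockwork. I would therefore use only the elementary form of Euclid's lemma for a prime dividing a product of two numbers smaller than it. If Euclid's lemma cannot be assumed, I would argue via the least return time instead. Let $t\ge1$ be the smallest number of rotations that returns a position to itself. The set of return counts is closed under subtraction, so by the division algorithm (equation~\eqref{eq:clock-division}) every return count is a multiple of $t$. In particular $t\mid p$, since $p$ rotations return. Because $p$ is prime, $t=1$ or $t=p$. But $t=1$ would require $s\equiv0$, contradicting $s\neq0$. Hence $t=p$, so there is no repetition before step $p$.

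This second route uses primality only through the fact that the divisors of $p$ are $1$ and $p$. I would present it as the proof, because it stays within the clock geometry and uses nothing beyond the carry division already established.
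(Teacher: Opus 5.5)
Your second route is correct, but it reaches the key divisibility by a different mechanism than the paper.

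The paper never uses the fact that $p$ rotations return. It treats the rotation as a permutation of the $p$ positions and observes that all its cycles have one common length $\ell$, because a total rotation that returns one position returns every position. It then obtains $\ell\mid p$ by counting, since the $p$ positions are partitioned into $c$ cycles of length $\ell$. Nontriviality and primality then give $\ell=p$.

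You instead start from the known return time $p$ (a total of $ps$ positions is $s$ full turns). You then show that the return counts are exactly the multiples of the least one $t$, via closure under subtraction and the division algorithm, so $t\mid p$. The final primality step is the same in both.

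What each buys:
\begin{itemize}
\item The paper's version is a pure state-space counting argument, matching the Cartesian-product counting used for the grand cycle, and it needs no division algorithm.
\item Yours is the ``order divides every return time'' argument. It avoids both the cycle partition and the equal-length claim. If you take $t$ minimal over all positions, you do not even need return counts to be independent of the starting position.
\end{itemize}

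You were right to discard the Euclid's-lemma route. In the paper, Lemma~\ref{lem:clock-zero-product}, the clock form of Euclid's lemma, is derived from this very rotation lemma, so invoking it here would be circular.
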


\begin{proof}
A fixed rotation is a bijection of the $p$ positions.  Its repeated action partitions the positions into cycles.  Every cycle has the same length $\ell$.  If $\ell$ applications return one position to itself, the same total rotation returns every position, and minimality gives the same first-return time throughout.  Thus $p=c\ell$ for some positive integer $c$.  The rotation is nontrivial, so $\ell>1$; since $p$ is prime, $\ell=p$ and $c=1$.
\end{proof}

\begin{figure}[ht]
\centering
\begin{tikzpicture}[font=\small,>=Latex]
  \def\R{1.7}
  \begin{scope}
    \coordinate (c) at (0,0);
    \draw[ClockBlue,very thick] (c) circle[radius=\R];
    \foreach \k in {0,...,6}{
      \coordinate (v\k) at ({90-360/7*\k}:\R);
      \fill[ClockBlue] (v\k) circle[radius=1.6pt];
    }
    \node[above=3pt] at (v0) {$0$};
    \node[right=6pt] at (v1) {$1$};
    \node[right=6pt] at (v2) {$2$};
    \node[below=5pt] at (v3) {$3$};
    \node[below=5pt] at (v4) {$4$};
    \node[left=6pt] at (v5) {$5$};
    \node[left=6pt] at (v6) {$6$};
    \draw[-{Latex[length=2.2mm]},ClockGreen,thick] (v0) -- (v2);
    \draw[-{Latex[length=2.2mm]},ClockGreen,thick] (v2) -- (v4);
    \draw[-{Latex[length=2.2mm]},ClockGreen,thick] (v4) -- (v6);
    \draw[-{Latex[length=2.2mm]},ClockGreen,thick] (v6) -- (v1);
    \draw[-{Latex[length=2.2mm]},ClockGreen,thick] (v1) -- (v3);
    \draw[-{Latex[length=2.2mm]},ClockGreen,thick] (v3) -- (v5);
    \draw[-{Latex[length=2.2mm]},ClockGreen,thick] (v5) -- (v0);
  \end{scope}
\end{tikzpicture}
\caption{The local cyclic fact used both for the grand-cycle construction and, later, for unique factorization. On the $7$-clock a rotation by two positions gives the single orbit $0\to2\to4\to6\to1\to3\to5\to0$.}
\label{fig:clock-cycle}
\end{figure}

\begin{lemma}[Zero-product lemma on a prime clock]
\label{lem:clock-zero-product}
Let $p$ be prime and let $a,b$ be positive integers.  If the $p$-clock rings after $ab$ unit ticks, then it rings after $a$ ticks or after $b$ ticks.
\end{lemma}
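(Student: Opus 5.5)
We argue by contraposition and use Lemma~\ref{lem:prime-rotation}. Suppose the $p$-clock does not ring after $a$ ticks and does not ring after $b$ ticks. By~\eqref{eq:clock-division} this means $r:=R_p(a)\neq0$ and $R_p(b)\neq0$. We must show that $R_p(ab)\neq0$.

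The key reinterpretation is that $ab$ ticks are $b$ successive blocks of $a$ ticks. Each block of $a$ ticks moves the seconds hand by the same fixed amount, namely $r$ positions, because advancing $a$ ticks from any position is the same as rotating by $r=a-pM_p(a)$. The $pM_p(a)$ full turns return every position to itself, so they can be ignored. Hence after $ab$ ticks, starting from $0$, the hand sits at the position reached by $b$ successive rotations by $r$.

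Since $r$ is nonzero, Lemma~\ref{lem:prime-rotation} says that these rotations started at $0$ visit all $p$ positions exactly once before returning. The first return to $0$ therefore happens after exactly $p$ rotations. More generally, $j$ rotations bring the hand back to $0$ if and only if $p\mid j$. For the "only if" direction, write $j=pq+s$ with $0\le s<p$. The $pq$ rotations return the hand to $0$, so the hand is at $0$ after $j$ rotations exactly when it is at $0$ after $s$ rotations. By the minimality of the first-return time $p$, this forces $s=0$.

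Apply this with $j=b$. If the clock rang after $ab$ ticks, then $p\mid b$, so $R_p(b)=0$ and the clock rings after $b$ ticks, contradicting our assumption. This proves the contrapositive.

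The main obstacle is the bookkeeping step: we must justify carefully that a block of $a$ ticks acts as a position-independent rotation by $r$, and that "rings after $b$ rotations" is equivalent to $p\mid b$. Both facts follow from the division identity~\eqref{eq:clock-division} together with the periodicity of the carry update~\eqref{eq:two-hand-update}. The remaining steps are routine.
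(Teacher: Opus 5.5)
Your proof is correct and follows the paper's argument: you view $ab$ ticks as $b$ repetitions of the nonzero rotation produced by $a$ ticks, then use Lemma~\ref{lem:prime-rotation} to conclude that returning to $0$ after $b$ such rotations forces $b$ to be a multiple of $p$, so the clock rings after $b$ ticks. Your contrapositive framing and your division-with-remainder check that $j$ rotations return to $0$ exactly when $p\mid j$ only spell out what the paper compresses into ``returns to $0$ only after a complete block of $p$ applications.''
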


\begin{proof}
Assume that the clock does not ring after $a$ ticks.  The position reached after those $a$ ticks is therefore a nonzero rotation.  Repeating this rotation $b$ times reaches the same position as $ab$ unit ticks.  By Lemma~\ref{lem:prime-rotation}, a nonzero rotation of the $p$-clock returns to $0$ only after a complete block of $p$ applications.  Hence, if the clock is at $0$ after $b$ applications, the ordinary $p$-clock also rings after $b$ unit ticks.
\end{proof}

\begin{definition}[Grand cycle]
For a finite stage of the recursively grown clockwork, the \emph{grand cycle} is the ordered orbit obtained by repeatedly applying the common unit tick to the Cartesian-product state~\eqref{eq:cartesian-state-space}.
\end{definition}

\begin{theorem}[Grand Cycle]
\label{prop:grand-cycle}
For discovered prime periods $p_1,\ldots,p_k$, the grand cycle has length
\begin{equation}
P_k=p_1p_2\cdots p_k
\label{eq:grand-cycle-length}
\end{equation}
and visits every element of $X_k$ exactly once before returning.
\end{theorem}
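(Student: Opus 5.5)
The plan is to show that the common tick $T$ acting on $X_k$ is a bijection whose orbit through the zero state has length exactly $P_k$. Since $|X_k|=P_k$, an orbit of length $P_k$ must visit every element exactly once. Because $T$ acts componentwise by a rotation on each cyclic factor, it is invertible. Hence every orbit is a pure cycle, with no preperiod, and it suffices to compute the first-return time of one state, say the zero state $\mathbf 0$ that the clockwork occupies at time $0$.

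\emph{Step 1: the orbit returns after $P_k$ ticks.} After $t$ ticks, the $p_i$-component has been rotated by $t$ unit positions. By \eqref{eq:clock-division}, it sits at $0$ exactly when $p_i\mid t$. At $t=P_k$ every component is therefore at $0$, so the return time $\ell$ of $\mathbf 0$ is at most $P_k$. The same reasoning shows that $\ell$ is the least $t\ge1$ that every $p_i$ divides.

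\emph{Step 2: the orbit does not return earlier.} We must show that any $t\ge1$ divisible by each of the distinct primes $p_1,\ldots,p_k$ satisfies $t\ge P_k$. This is the main obstacle. The paper has not yet assumed unique factorization, so I would derive it from Lemma~\ref{lem:clock-zero-product}, arguing by induction on $j$ that $p_1\cdots p_j\mid t$. For the inductive step, write $t=(p_1\cdots p_{j})\,s$. The $p_{j+1}$-clock rings after $t$ ticks. Applying Lemma~\ref{lem:clock-zero-product} repeatedly to the factors of the product $p_1\cdots p_j\cdot s$, it rings after some $p_i$ ticks with $i\le j$, or after $s$ ticks. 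The first option would mean $p_{j+1}\mid p_i$, which is impossible for distinct primes. Hence $p_{j+1}\mid s$, and so $p_1\cdots p_{j+1}\mid t$. With $j=k$ this gives $P_k\mid t$, and therefore $t\ge P_k$. Combining the two steps, $\ell=P_k$.

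\emph{Step 3: conclusion.} The orbit of $\mathbf 0$ consists of $P_k$ pairwise distinct states: if two states coincided before time $P_k$, invertibility of $T$ would force an earlier return to $\mathbf 0$. These $P_k$ distinct states lie in $X_k$, which has exactly $P_k$ elements, so the orbit is all of $X_k$. This is a single cycle of length $P_k$, which is also the grand cycle through any starting state.

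Alternatively, Step 2 could use Lemma~\ref{lem:prime-rotation} directly. One would induct on $k$, viewing the tick on the first $k-1$ coordinates at times that are multiples of $P_{k-1}$ as a fixed rotation of the $p_k$-clock by $P_{k-1}$ positions. That rotation is nonzero by Lemma~\ref{lem:clock-zero-product}, so it visits all $p_k$ positions before returning.
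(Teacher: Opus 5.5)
Your proof is correct, but its central step differs from the paper's. The outer frame is the same as the paper's closing step: the tick is invertible, so $P_k$ pairwise distinct states fill a set of $P_k$ elements.

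\textbf{The paper's route.} The paper inducts over growth events. It observes the enlarged system in blocks of $P_k$ ticks. During each block the old coordinates return to their start, so the new $p_{k+1}$-clock is acted on by a single fixed rotation of $P_k$ positions. Lemma~\ref{lem:clock-zero-product} is used only to show this rotation is nonzero. Lemma~\ref{lem:prime-rotation} then threads the new coordinate through all $p_{k+1}$ positions, multiplying the return time by $p_{k+1}$.

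\textbf{Your route.} You identify the first-return time of $\mathbf 0$ directly as the least common multiple of the periods. You then show, by iterating Lemma~\ref{lem:clock-zero-product} (Euclid's lemma in clock form), that every common multiple of the distinct primes $p_1,\ldots,p_k$ is a multiple of $P_k$. Lemma~\ref{lem:prime-rotation} enters only indirectly, through the proof of the zero-product lemma.

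\textbf{Comparison.} Your argument is shorter and is independent of the order in which the primes were discovered. However, it is essentially the standard injectivity-plus-counting proof of the Chinese remainder theorem recast in clock language. The paper's block argument instead makes the proof visibly dynamical and mirrors the recursion: it shows how each growth event multiplies the cycle length by the new period. That is the point the paper stresses when it separates the role of Cartesian-product counting from the way the prime-rotation lemma threads the new coordinate through the old orbit. Your closing ``alternative'' paragraph is, in fact, the paper's proof.
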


\begin{proof}
We argue inductively over growth events.  One prime clock is itself a single cycle, so the assertion holds for $k=1$.  Suppose the first $k$ clocks form one cycle of length $P_k$ and exhaust $X_k$.  When the next prime $p_{k+1}$ is discovered, attach its clock and observe the enlarged system in blocks of $P_k$ ticks.  After every such block all old coordinates have returned to their starting positions.  The new clock, however, has undergone the fixed rotation consisting of $P_k$ unit steps.

This rotation is nonzero; otherwise the $p_{k+1}$-clock would ring after $P_k=p_1\cdots p_k$ ticks.  Repeated application of Lemma~\ref{lem:clock-zero-product} would then force it to ring after some earlier prime $p_j<p_{k+1}$, which is impossible because a $p_{k+1}$-clock cannot complete a positive cycle in fewer than $p_{k+1}$ unit ticks.  Lemma~\ref{lem:prime-rotation} therefore shows that the successive $P_k$-tick blocks visit all $p_{k+1}$ positions of the new clock before returning.  The enlarged first-return time is consequently $P_kp_{k+1}$.

The Cartesian product contains exactly $|X_k|p_{k+1}=P_kp_{k+1}$ states.  No state can repeat before the first return.  If two intermediate states agreed, reversibility of the common tick would give an earlier return to the initial state.  Hence the orbit contains exactly as many distinct states as the whole Cartesian product and therefore visits every state once.
\end{proof}

This proof uses no modular notation and no Chinese remainder theorem.  It separates two elementary ingredients that are easy to see in the clock picture: Cartesian-product counting determines how many joint configurations exist, while the prime-clock rotation lemma determines how the new coordinate is threaded through the completed orbit of the old ones.

\begin{corollary}[Counting nonzero configurations]
\label{cor:grand-cycle-survivors}
In one grand cycle of $p_1,\ldots,p_k$, exactly
\begin{equation}
\prod_{j=1}^k(p_j-1)
\label{eq:grand-cycle-survivors}
\end{equation}
states have no clock at its zero position.
\end{corollary}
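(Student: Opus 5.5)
The plan is to read the count directly off the Grand Cycle theorem (Theorem~\ref{prop:grand-cycle}). That theorem says the grand cycle of $p_1,\ldots,p_k$ visits every element of the Cartesian product $X_k=\mathcal C_{p_1}\times\cdots\times\mathcal C_{p_k}$ exactly once before returning. So the states encountered in one grand cycle are in bijection with the tuples $(r_1,\ldots,r_k)$ with $r_j\in\{0,1,\ldots,p_j-1\}$, each occurring once. Counting states of the orbit with a given property therefore reduces to counting tuples in $X_k$ with that property. No dynamical information is needed beyond this bijection.

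First, I would state this reduction explicitly: the map from the $P_k$ successive tick times of one grand cycle to $X_k$, sending a time to the joint seconds state, is a bijection. Second, I would identify the subset of interest as the set of tuples with $r_j\neq0$ for every $j$. This set is itself a Cartesian product $\prod_{j=1}^k\{1,\ldots,p_j-1\}$, because the condition ``no clock at zero'' is imposed coordinatewise and independently on each factor. Third, I would count this product: the $j$-th factor has $p_j-1$ elements, so the product has $\prod_{j=1}^k(p_j-1)$ elements. By the bijection, this is exactly the number of states in one grand cycle with no clock at its zero position, which gives~\eqref{eq:grand-cycle-survivors}.

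The only point needing care is the first step. The independence of the coordinatewise conditions is a statement about the product set $X_k$, not about the dynamics. It transfers to the orbit only because the Grand Cycle theorem guarantees full, repetition-free coverage; without it, the orbit might miss or repeat some nonzero configurations and the product count would fail. Once the bijection is invoked, the rest is routine multiplication. If desired, one can add a remark that these surviving states correspond to the times in a block of length $P_k$ at which no installed clock rings, that is, times coprime to $P_k$. This connects the count to Euler's totient, but it is not needed for the proof.
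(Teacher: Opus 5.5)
Your proposal is correct and matches the paper's proof: count the Cartesian product $\prod_{j=1}^k\{1,\ldots,p_j-1\}$ of nonzero positions, then transfer that count to the orbit using the exactly-once visitation guaranteed by Theorem~\ref{prop:grand-cycle}. Your explicit point that the transfer depends on full, repetition-free coverage is exactly the role the Grand Cycle theorem plays in the paper's argument.
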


\begin{proof}
For each $p_j$-clock there are $p_j-1$ choices other than zero.  The admissible joint configurations form the Cartesian product of these nonzero position sets, so their number is the product of their cardinalities.  Theorem~\ref{prop:grand-cycle} ensures that each such configuration occurs exactly once along the ordered orbit.
\end{proof}

\subsubsection{The modular and CRT interpretation}
Once the positions of each discovered $p$-clock are labelled $0,1,\ldots,p-1$, the state at integer time $t$ is
\begin{equation}
C_k(t)=(t\bmod p_1,\ldots,t\bmod p_k).
\label{eq:jointstate}
\end{equation}
The common tick is the familiar coordinatewise update
\begin{equation}
T_k(x_1,\ldots,x_k)=(x_1+1,\ldots,x_k+1),
\label{eq:update}
\end{equation}
with each coordinate read on its own finite clock.  Theorem~\ref{prop:grand-cycle} says, before any appeal to CRT, that this temporal orbit visits every possible residue tuple exactly once in $P_k$ steps.

The Chinese remainder theorem now identifies the same fact algebraically through the bijection
\begin{equation}
\Phi:\Z/P_k\Z\longrightarrow\prod_{j=1}^k\Z/p_j\Z,
\qquad
\Phi(t)=(t\bmod p_1,\ldots,t\bmod p_k).
\label{eq:crt-map}
\end{equation}
Thus the clockwork does not replace CRT.  Rather, the grand-cycle proof constructs the finite orbit by recursion and counting, while CRT recognizes the resulting state space as a familiar product of residue rings.

Figure~\ref{fig:autonomous-array} displayed the same autonomous array before this algebraic interpretation was introduced. The dynamical statement is that the common tick orders the Cartesian product into one orbit; the CRT statement is that the same product is canonically a residue system modulo $P_k$.

\begin{remark}[State space versus trajectory]
The Cartesian product is the set of possible joint states; the grand cycle is the ordered trajectory through that set.  This distinction is useful because it turns a static simultaneous-congruence picture into a reversible finite dynamical system and makes return times, zero alarms, and sieve-like observables properties of an explicit orbit.
\end{remark}

\subsection{The valuation readout and factorization depth}
\label{sec:factorized-generation}

The early trajectory in Figure~\ref{fig:first-fifteen-trajectories} displays the valuations derived from the two clock hands. For a fixed prime $p$,
\[
V_p(n)=\nu_p(n)=\max\{e\ge0:p^e\mid n\}
\]
is the divisibility depth. Equivalently,
\begin{equation}
\boxed{V_p(n)=\sum_{k\ge1}\mathbf 1_{\{p^k\mid n\}}.}
\label{eq:valuation-depth}
\end{equation}
At $n=72$, for example, $2,4,8$ divide $72$ while $16$ does not, so $V_2(72)=3$; similarly $V_3(72)=2$. Thus
\[
\V(72)=(3,2,0,0,\ldots).
\]

The following additive law for the valuation readouts can be proved before invoking unique factorization.

\begin{lemma}[Primewise additivity of the valuation readout]
\label{lem:valuation-additive}
For every prime $p$ and positive integers $a,b$,
\begin{equation}
V_p(ab)=V_p(a)+V_p(b).
\label{eq:valuation-additive}
\end{equation}
Hence $\V(ab)=\V(a)+\V(b)$ coordinatewise.
\end{lemma}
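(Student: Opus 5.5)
The plan is to prove $V_p(ab)=V_p(a)+V_p(b)$ by strong induction on $V_p(a)+V_p(b)$, without unique factorization. The only divisibility input will be the zero-product lemma on a prime clock (Lemma~\ref{lem:clock-zero-product}). Write $\alpha=V_p(a)$ and $\beta=V_p(b)$.

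\textbf{Base case} $\alpha=\beta=0$. The $p$-clock rings neither after $a$ ticks nor after $b$ ticks. By Lemma~\ref{lem:clock-zero-product} (in contrapositive form) it does not ring after $ab$ ticks either. Hence $p\nmid ab$ and $V_p(ab)=0=\alpha+\beta$.

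\textbf{Inductive step.} If $\alpha+\beta>0$, assume without loss of generality that $\alpha\ge1$. Then $p\mid a$, so $a=pa'$ with $a'=M_p(a)$ by~\eqref{eq:clock-division}. The recursion~\eqref{eq:valuation-recursion} gives $V_p(a)=1+V_p(a')$, so $V_p(a')=\alpha-1$. Next, $ab=p(a'b)$, so the $p$-clock rings at time $ab$ with minutes reading $M_p(ab)=a'b$. Applying~\eqref{eq:valuation-recursion} again gives $V_p(ab)=1+V_p(a'b)$. The induction hypothesis, applied to the pair $(a',b)$ with smaller total $\alpha-1+\beta$, yields $V_p(a'b)=\alpha-1+\beta$. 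Therefore $V_p(ab)=\alpha+\beta$.

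Three small points need checking along the way.
\begin{itemize}
\item Uniqueness of the quotient: $ab=p\,M_p(ab)$ forces $M_p(ab)=a'b$. This follows from cancellation in the integers, or equivalently from the fact that the minutes reading at a ring time $pm$ equals $m$ by the clock-division identity.
\item The edge case $a'=1$, where $V_p(1)=0$ by convention. It is consistent with the recursion.
\item Well-foundedness: since $V_p(a)\le\log_p a$ is finite, the induction is well-founded. Alternatively one can induct on $a+b$ and avoid this point.
\end{itemize}

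The coordinatewise statement $\V(ab)=\V(a)+\V(b)$ then follows by applying the identity at each prime $p$. For primes not yet installed by time $ab$, all three readouts are $0$: such a $p$ exceeds $ab$, and hence exceeds $a$ and $b$.

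The main obstacle is the base case, the claim that $p\nmid a$ and $p\nmid b$ imply $p\nmid ab$. This is Euclid's lemma in disguise and is where primality is essential. It is already supplied by Lemma~\ref{lem:clock-zero-product}, so the remaining work is bookkeeping with the ring recursion.
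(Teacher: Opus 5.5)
Your proof is correct and rests on the same key input as the paper's: strip the factors of $p$, then apply Lemma~\ref{lem:clock-zero-product} to the $p$-free parts. The paper strips them all at once, writing $a=p^ra_0$ and $b=p^sb_0$ and arguing by maximality. You instead peel off one factor at a time through the ring branch of \eqref{eq:valuation-recursion}, and unrolling your induction recovers exactly the paper's argument.
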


\begin{proof}
Put $r=V_p(a)$ and $s=V_p(b)$. Then $a=p^r a_0$ and $b=p^s b_0$, where neither $a_0$ nor $b_0$ makes the $p$-clock ring. Thus $p^{r+s}\mid ab$. If one further factor of $p$ divided $ab$, the $p$-clock would ring on $a_0b_0$. By Lemma~\ref{lem:clock-zero-product} it would then ring on $a_0$ or on $b_0$, contradicting the maximality of $r$ or $s$. Therefore $V_p(ab)=r+s$.
\end{proof}

\begin{remark}[Valuation as a derived readout]
The integer minutes counter $M_p$ is part of the autonomous two-hand clock, whereas $V_p$ is obtained from~\eqref{eq:valuation-recursion}. The tests $p\mid n,p^2\mid n,p^3\mid n,\ldots$ could themselves be organized as another clock array, but that extra realization is unnecessary here. A ring, the current minutes address, and the clock's own valuation history are sufficient to generate the exponent depth.
\end{remark}

The next subsection first proves that the valuation trajectory never revisits a row. Together with primewise additivity, this dynamical non-repetition reconstructs every integer from its valuation row and yields unique factorization.

\subsection{Non-repetition and unique factorization from prime clocks}
\label{sec:clock-fta}

The grand-cycle construction supplied the local zero-product property of a prime clock, and Lemma~\ref{lem:valuation-additive} converted that property into coordinatewise additivity of the valuation readouts. Additivity now interacts with the temporal order of the unit-step trajectory. If a valuation row repeated, any positive coordinate would identify a prime clock ringing at both times, and its minutes readings would point to an earlier repeated pair.

\begin{proposition}[Non-repetition of valuation rows]
\label{prop:valuation-nonrepetition}
For positive integers $n$ and $m$,
\begin{equation}
\V(n)=\V(m)\quad\Longrightarrow\quad n=m.
\label{eq:valuation-injective}
\end{equation}
Thus the valuation trajectory $\V(1),\V(2),\V(3),\ldots$ has no repeated row.
\end{proposition}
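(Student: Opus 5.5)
We argue by strong induction (equivalently, minimal counterexample) on $\max(n,m)$, following the descent through minutes addresses described just before the statement. Suppose $\V(n)=\V(m)$ with $n\ne m$, and choose such a pair with $n+m$ minimal. First we dispose of the case in which the common row is $\mathbf0$. Then no installed clock rings at time $n$ or at time $m$. By Proposition~\ref{prop:prime-discovery}, any $n>1$ with no ringing clock is itself a newly installed prime, and then $V_n(n)=1\neq0$. Hence a zero row occurs only at $n=1$, which forces $n=m=1$, a contradiction.

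Otherwise some coordinate $V_p(n)=V_p(m)\ge1$ is positive. By the valuation recursion~\eqref{eq:valuation-recursion}, a positive readout happens only on the ring branch. So the $p$-clock rings at both times, and~\eqref{eq:clock-division} gives $n=pM_p(n)$ and $m=pM_p(m)$. Set $n'=M_p(n)$ and $m'=M_p(m)$. These are positive, smaller than $n$ and $m$ respectively, and distinct, since $n'=m'$ would give $n=m$. The key step is to show that $\V(n')=\V(m')$. This yields a smaller repeated pair and contradicts minimality.

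To compare the rows at the minutes addresses, we apply Lemma~\ref{lem:valuation-additive} with $a=p$ and $b=n'$. This gives $\V(n)=\V(p)+\V(n')$, and likewise $\V(m)=\V(p)+\V(m')$. Subtracting coordinatewise, which is legitimate because all entries are finite nonnegative integers, gives $\V(n')=\V(m')$. We also need $\V(p)$ explicitly, namely the unit vector at $p$, because the prime-coordinate recursion only certifies the $p$-coordinate directly. For $q\ne p$ we have $V_q(p)=0$, since a prime $q$ ringing at $p$ would be a factor $1<q<p$ or equal to $p$, and $V_p(p)=1+V_p(1)=1$. In fact subtraction does not even require knowing $\V(p)$ explicitly, only that it is finite.

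The main obstacle is making the descent honest within the paper's framework. The recursion~\eqref{eq:valuation-recursion} alone gives $V_p(n')=V_p(m')$ only in the $p$-coordinate. Agreement in the other coordinates needs additivity, and hence Lemma~\ref{lem:clock-zero-product}, which is precisely the prime-clock input. We must avoid any circular appeal to unique factorization. Since Lemma~\ref{lem:valuation-additive} was proved from the zero-product lemma alone, the argument stays self-contained. The well-foundedness follows from $M_p(n)<n$, which holds because $p\ge2$.
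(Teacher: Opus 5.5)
Your proof is correct and follows the paper's argument essentially step for step. The zero row is excluded for times $t>1$ via Proposition~\ref{prop:prime-discovery}, and a minimal repeated pair is pushed down through the minutes addresses $M_p(n), M_p(m)$ using Lemma~\ref{lem:valuation-additive} and cancellation of $\V(p)$. The differences are only cosmetic: the paper minimizes the larger index $m$ rather than $n+m$ (either measure works, as does the $\max(n,m)$ you first announce), and it uses $\V(p)=\mathbf e_p$ explicitly where, as you note, finiteness of $\V(p)$ suffices.
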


\begin{proof}
The row at time $1$ is $\V(1)=\mathbf0$. At every time $t>1$, either an existing clock rings, in which case~\eqref{eq:valuation-recursion} gives a positive valuation coordinate, or no existing clock rings, in which case Proposition~\ref{prop:prime-discovery} installs the new $t$-clock with $V_t(t)=1$. Hence the zero row never occurs again.

Suppose that a repeated row nevertheless exists, and choose $1<n<m$ with $m$ minimal and $\V(n)=\V(m)$. The common row is nonzero, so choose a prime $p$ with
\[
V_p(n)=V_p(m)>0.
\]
The $p$-clock rings at both times. By~\eqref{eq:clock-division}, its minutes readings
\[
a=M_p(n),\qquad b=M_p(m)
\]
satisfy $n=pa$, $m=pb$, and $1\le a<b<m$. Since $\V(p)=\mathbf e_p$, Lemma~\ref{lem:valuation-additive} gives
\[
\V(n)=\mathbf e_p+\V(a),
\qquad
\V(m)=\mathbf e_p+\V(b).
\]
Subtracting the same coordinate unit $\mathbf e_p$ from the equal rows yields $\V(a)=\V(b)$. This is an earlier repeated pair because $a<b<m$, contradicting the minimal choice of $m$. Therefore no valuation row repeats.
\end{proof}

\begin{theorem}[Fundamental theorem of arithmetic, clock form]
\label{prop:clock-fta}
Every integer $n>1$ is a product of primes, and that product is unique up to the order of its factors.
\end{theorem}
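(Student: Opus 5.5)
Existence and uniqueness will be handled separately, with uniqueness read off from the valuation coordinates through Proposition~\ref{prop:valuation-nonrepetition} and Lemma~\ref{lem:valuation-additive}.

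\emph{Existence} goes by strong induction on $n$. If $n$ is prime it is its own one-factor product. If $n$ is composite, Lemma~\ref{lem:smaller-prime-factor} supplies a prime $p<n$ with $p\mid n$, and $n=pM_p(n)$ by~\eqref{eq:clock-division} with $1<M_p(n)<n$. The induction hypothesis factors $M_p(n)$, and prepending $p$ factors $n$. In clock language, this is the descent through minutes addresses already used in the valuation recursion~\eqref{eq:valuation-recursion}.

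\emph{Uniqueness} is where the valuation coordinates do the work. Suppose $n=q_1q_2\cdots q_r=q'_1q'_2\cdots q'_s$ with all $q_i,q'_j$ prime. First I will note that $\V(q)=\mathbf e_q$ for a prime $q$. Indeed, $V_q(q)=1$, and for a different prime $p$ the $p$-clock does not ring at time $q$ because $q$ has no factor strictly between $1$ and itself. Then iterating Lemma~\ref{lem:valuation-additive} gives
\[
\V(n)=\sum_{i=1}^r\mathbf e_{q_i}=\sum_{j=1}^s\mathbf e_{q'_j}.
\]
Comparing the $p$-coordinate for each prime $p$ shows that $p$ occurs the same number of times, namely $V_p(n)$, in both lists. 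Hence the two multisets of primes coincide, so the factorizations agree up to order. As a complement, I will remark that Proposition~\ref{prop:valuation-nonrepetition} gives the converse reconstruction: a factorization pattern, that is a valuation row, determines $n$. Thus distinct integers never share a factorization, and the correspondence $n\leftrightarrow\V(n)$ is a bijection onto the finitely supported rows that occur.

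The main obstacle is only mild. The real content, the prime zero-product property, has already been absorbed into Lemma~\ref{lem:valuation-additive} through Lemma~\ref{lem:clock-zero-product}. What needs care is justifying $\V(q)=\mathbf e_q$ without circularity, meaning that no smaller prime clock rings at time $q$. For this I will cite Proposition~\ref{prop:prime-discovery} directly: a prime $q$ is installed precisely because no existing clock rings at time $q$. No clock installed after time $q$ can ring at time $q$ either, since a $p$-clock with $p>q$ has $R_p(q)=q\neq0$.
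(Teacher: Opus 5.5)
Your proof is correct, but its existence half takes a different route from the paper; the uniqueness half is the same.

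\textbf{How the paper argues.} The paper does not induct at this stage. It synthesizes $F(\V(n))=\prod_p p^{V_p(n)}$. Using $\V(p)=\mathbf e_p$ and Lemma~\ref{lem:valuation-additive}, it gets $\V(F(\V(n)))=\V(n)$. Proposition~\ref{prop:valuation-nonrepetition} then forces $F(\V(n))=n$. Its uniqueness step is exactly your coordinate comparison. So the descent through minutes addresses lives inside the non-repetition proof. Existence arrives already in the form $n=\prod_p p^{V_p(n)}$, with the exponents identified as the clock readouts. Via $\V(F(\mathbf v))=\mathbf v$, every finite-support nonnegative row is realized in the same stroke. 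This is what lets the paper present unique factorization as a consequence of the injectivity of the valuation trajectory.

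\textbf{How you argue.} You place the descent in existence instead: Lemma~\ref{lem:smaller-prime-factor} gives a prime $p<n$, then $n=pM_p(n)$ with $1<M_p(n)<n$, and strong induction finishes. Proposition~\ref{prop:valuation-nonrepetition} is therefore never needed, and in fact it becomes a corollary of your result. Your dependency chain is shorter. It also makes plain that the only nontrivial input is Lemma~\ref{lem:clock-zero-product} feeding additivity.

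\textbf{Smaller points.} Your explicit check of $\V(q)=\mathbf e_q$ is more careful than the paper, which simply asserts it: smaller clocks do not ring by primality, and larger ones have $R_p(q)=q\ne0$. Your closing ``complement'' is harmless but undersells the picture. The correspondence $n\leftrightarrow\V(n)$ is onto all finite-support nonnegative rows, not merely onto those that occur.
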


\begin{proof}
For any finite-support nonnegative vector $\mathbf v=(v_p)_p$, form the integer
\[
F(\mathbf v):=\prod_p p^{v_p}.
\]
For each prime $p$, $\V(p)=\mathbf e_p$. Repeated application of Lemma~\ref{lem:valuation-additive} therefore gives
\begin{equation}
\V(F(\mathbf v))=\sum_p v_p\mathbf e_p=\mathbf v.
\label{eq:valuation-synthesis}
\end{equation}

For a fixed positive integer $n$, the row $\V(n)$ has finite support, since $V_p(n)=0$ whenever $p>n$. Taking $\mathbf v=\V(n)$ in~\eqref{eq:valuation-synthesis} gives
\[
\V(F(\V(n)))=\V(n).
\]
Both $F(\V(n))$ and $n$ are integer times on the same trajectory, so Proposition~\ref{prop:valuation-nonrepetition} forces
\[
n=F(\V(n))=\prod_p p^{V_p(n)}.
\]
This proves existence of a prime factorization directly from the valuation row. If also $n=\prod_p p^{a_p}$ for a finite-support family of nonnegative integers $(a_p)_p$, then additivity gives $\V(n)=(a_p)_p$. Hence $a_p=V_p(n)$ for every prime $p$, proving uniqueness up to the order of the factors.
\end{proof}

The local mathematical content remains classical: Lemma~\ref{lem:clock-zero-product} is the clock form of Euclid's prime-product property. The organization of the proof is intrinsic to the trajectory. A hypothetical repeated valuation row descends, through a ringing clock's minutes address, to an earlier repeated row. Once non-repetition is known, synthesizing a prime product with the same row reconstructs the original integer time. No prior factorization of that integer is assumed.

\subsubsection{The valuation readouts become intrinsic multiplicative coordinates}

The proof above identifies the exponent of each prime in the unique factorization of $n$ with the derived readout $V_p(n)$. Hence
\begin{equation}
\boxed{V_p(n)=\nu_p(n)=\sum_{k\ge1}\mathbf{1}_{\{p^k\mid n\}},
\qquad
n=\prod_p p^{V_p(n)}.}
\label{eq:valuation-clock-tower}
\end{equation}
The valuation vector is therefore not merely an annotation: the trajectory and the synthesis map $F$ give the bijection
\[
\N_{>0}\longleftrightarrow
\bigl\{(v_p)_p:v_p\in\Z_{\ge0}\text{ and }v_p=0\text{ for all but finitely many }p\bigr\}.
\]
The two clock hands are not needed to reconstruct the integer from this static representation; their role is dynamical and becomes important again when the integer is advanced additively.

\subsubsection{Elementary arithmetic in prime coordinates}
\label{sec:elementary}

The following results are classical.  Their interest here is how little remains to be proved once the exponent coordinates and residue dynamics are visible.

\subsubsection{Divisibility, gcd, and lcm}

For finite-support valuation vectors define the \emph{weak componentwise order}
\[
\mathbf u\preceq\mathbf v
\quad\Longleftrightarrow\quad
u_p\le v_p\quad\text{for every prime }p.
\]
If $\V(a)=(a_p)_p$ and $\V(b)=(b_p)_p$, unique factorization gives
\begin{equation}
\boxed{a\mid b\iff \V(a)\preceq\V(b)
       \iff a_p\le b_p\ \text{for every }p.}
\label{eq:divisibility}
\end{equation}
Thus divisibility is simply the weak product order on the valuation coordinates. Moreover,
\begin{align}
\V(\gcd(a,b))&=(\min(a_p,b_p))_p,\label{eq:gcdmin}\\
\V(\lcm(a,b))&=(\max(a_p,b_p))_p.\label{eq:lcmmax}
\end{align}
The identity $\gcd(a,b)\lcm(a,b)=ab$ then reduces in each prime coordinate to
\[
\min(r,s)+\max(r,s)=r+s.
\]

\begin{figure}[ht]
\centering
\begin{tikzpicture}[x=1.2cm,y=1.2cm,font=\small]
  \draw[-{Latex[length=2.5mm]},ClockBlue!75] (-0.15,0)--(4.25,0) node[right] {$\nu_2$};
  \draw[-{Latex[length=2.5mm]},ClockBlue!75] (0,-0.15)--(0,4.25) node[above] {$\nu_3$};
  \foreach \x in {0,...,4}{\draw[ClockBlue!18] (\x,0)--(\x,4); \node[below=1mm] at (\x,0) {\x};}
  \foreach \y in {1,...,4}{\draw[ClockBlue!18] (0,\y)--(4,\y); \node[left=1mm] at (0,\y) {\y};}
  \fill[ClockBlue] (3,2) circle[radius=2.1mm];
  \node[anchor=west] at (3.12,2.16) {$72=2^3 3^2$};
  \fill[ClockGreen] (2,3) circle[radius=2.1mm];
  \node[anchor=east] at (1.88,3.22) {$108=2^2 3^3$};
  \fill[ClockGold] (2,2) circle[radius=2.1mm];
  \node[anchor=north east,align=right] at (1.83,1.88) {$\gcd=36$\\coordinatewise min};
  \fill[ClockRed] (3,3) circle[radius=2.1mm];
  \node[anchor=south west,align=left] at (3.12,3.12) {$\lcm=216$\\coordinatewise max};
  \draw[dashed,ClockGold!80] (2,2)--(3,2)--(3,3);
  \draw[dashed,ClockGreen!70] (2,2)--(2,3)--(3,3);
\end{tikzpicture}
\caption{In a two-prime slice, gcd and lcm are simply the lower-left and upper-right corners determined by the two exponent vectors.}
\label{fig:gcdlcm}
\end{figure}

\subsubsection{Perfect powers and elementary root tests}

If
\[
n=\prod_p p^{e_p},
\]
then
\begin{equation}
\boxed{n\text{ is a }k\text{-th power}\iff k\mid e_p\text{ for every }p.}
\label{eq:kthpower}
\end{equation}
Thus perfect squares occupy the even sublattice $2\Z_{\ge0}^{(\Pset)}$, cubes occupy $3\Z_{\ge0}^{(\Pset)}$, and so forth. The largest perfect-power exponent of $n>1$ is
\[
\gcd\{e_p:e_p\ne0\}.
\]

The familiar irrationality of $\sqrt2$ becomes a one-coordinate obstruction. If $\sqrt2=a/b\in\Q_{>0}$, then
\[
2\V(a)=\V(2)+2\V(b).
\]
Looking only at the $2$-coordinate gives an even integer on the left and an odd integer on the right, impossible. Equivalently, the vector $(1,0,\ldots)$ of $2$ is not in the even lattice.

\subsubsection{Euler's totient as a grand-cycle observable}

Let
\[
n=\prod_{j=1}^{r}p_j^{a_j},
\qquad
P=\prod_{j=1}^{r}p_j.
\]
An integer time is coprime to $n$ exactly when none of the clocks $p_1,\ldots,p_r$ rings.  Corollary~\ref{cor:grand-cycle-survivors} counts these states without using modular arithmetic. Among the $P$ positions of one grand cycle, exactly
\[
\prod_{j=1}^{r}(p_j-1)
\]
have no zero coordinate.  Since $P$ divides $n$, the same prime-clock pattern occurs exactly $n/P$ times during the first $n$ integer ticks.  Hence
\begin{equation}
\boxed{\varphi(n)=\frac{n}{P}\prod_{j=1}^{r}(p_j-1)
      =n\prod_{p\mid n}\left(1-\frac1p\right)
      =\prod_{p^a\parallel n}p^{a-1}(p-1).}
\label{eq:totient}
\end{equation}
For a prime power this reduces at once to $\varphi(p^a)=p^{a-1}(p-1)$.  The usual CRT interpretation remains available, but the count itself is already visible from the Cartesian-product grand cycle.

\begin{figure}[ht]
\centering
\begin{tikzpicture}[font=\scriptsize]
  \def\w{0.72}
  \def\h{0.62}
  \foreach \n in {1,...,30}{
    \pgfmathtruncatemacro{\row}{(\n-1)/15}
    \pgfmathtruncatemacro{\col}{mod(\n-1,15)}
    \pgfmathsetmacro{\xx}{\col*\w}
    \pgfmathsetmacro{\yy}{-\row*\h}
    \pgfmathtruncatemacro{\rTwo}{mod(\n,2)}
    \pgfmathtruncatemacro{\rThree}{mod(\n,3)}
    \pgfmathtruncatemacro{\rFive}{mod(\n,5)}
    \ifnum\rTwo>0
      \ifnum\rThree>0
        \ifnum\rFive>0
          \fill[SoftGreen] (\xx,\yy) rectangle ++(\w-0.04,\h-0.04);
          \draw[ClockGreen,thick] (\xx,\yy) rectangle ++(\w-0.04,\h-0.04);
        \else
          \fill[PaleGray] (\xx,\yy) rectangle ++(\w-0.04,\h-0.04);
          \draw[ClockBlue!25] (\xx,\yy) rectangle ++(\w-0.04,\h-0.04);
        \fi
      \else
        \fill[PaleGray] (\xx,\yy) rectangle ++(\w-0.04,\h-0.04);
        \draw[ClockBlue!25] (\xx,\yy) rectangle ++(\w-0.04,\h-0.04);
      \fi
    \else
      \fill[PaleGray] (\xx,\yy) rectangle ++(\w-0.04,\h-0.04);
      \draw[ClockBlue!25] (\xx,\yy) rectangle ++(\w-0.04,\h-0.04);
    \fi
    \node at ({\xx+(\w-0.04)/2},{\yy+(\h-0.04)/2}) {\n};
  }
\end{tikzpicture}
\caption{The totient as a survival count on the ordered clockwork orbit. For the $2$-, $3$-, and $5$-clocks, the eight green states avoid every zero alarm, so $\varphi(30)=8$.}
\label{fig:totient}
\end{figure}

\subsubsection{B\'ezout through valuation coordinates and clocks}

The exponent layer first removes the common multiplicative content. Put
\[
d=\gcd(a,b),\qquad a=da',\qquad b=db'.
\]
By~\eqref{eq:gcdmin}, $a'$ and $b'$ have no positive prime coordinate in common, hence $\gcd(a',b')=1$.

Now consider multiplication by $a'$ on the residue clock modulo $b'$. If
\[
ra'\equiv sa'\pmod{b'},
\]
then $b'\mid(r-s)a'$. Prime-exponent comparison and the absence of common prime factors imply $b'\mid r-s$. Thus multiplication by $a'$ is injective on the finite set $\Z/b'\Z$, hence bijective. Some residue $x$ therefore satisfies
\[
xa'\equiv1\pmod{b'}.
\]
So $xa'-1=kb'$ for an integer $k$, and
\begin{equation}
xa'+(-k)b'=1.
\end{equation}
Multiplying by $d$ yields
\begin{equation}
\boxed{xa+(-k)b=\gcd(a,b).}
\label{eq:bezout}
\end{equation}

\begin{remark}[Why the two layers cooperate]
The valuation coordinates perform the multiplicative simplification by removing the coordinatewise minimum. The residue clock then supplies the inverse motion. Once the common prime factors are gone, multiplication by $a'$ permutes the $b'$ residue states, so one of the states is $1$. The proof is elementary and classical; the framework makes the division of labor between factorization and congruence unusually explicit.
\end{remark}

\begin{remark}[Local non-closure and global reconstruction]
\label{rem:global-reconstruction}
For positive integers, $\V(n)$ represents $n$ uniquely. Under multiplication the valuation projection closes on itself,
\[
\V(ab)=\V(a)+\V(b),
\]
so an observer who watches only the valuation coordinates does not need to see the clock hands. In this restricted multiplicative view the seconds and minutes readings are \emph{hidden variables}. This should not be confused with autonomy of the prime clocks themselves. Under $+1$, each isolated clock updates its two hands directly by~\eqref{eq:two-hand-update}; its valuation readout is then obtained from those hands and its own earlier valuation history. The valuation coordinate alone is not a closed local state description for $n\mapsto n+1$ or for general additive evolution.

Taken together, however, the valuation readings contain sufficient global information. By Theorem~\ref{prop:clock-fta}, the finite-support vector $\V(n)$ determines $n$ uniquely. It therefore determines $n+1$, every current pair $(R_p(n),M_p(n))$, and the complete successor state, including all successor clock and valuation readings and whether a new clock is installed. Thus the hidden clock hands are reconstructible rather than lost.

This is global determinacy, not a coordinatewise local recurrence: reconstruction passes through the represented integer and may require factorizing its successor. Maintaining the two-hand states explicitly instead makes $n\mapsto n+1$ local on every clock and, more generally,
\[
R_p(a+b)=R_p(a)+R_p(b)\pmod p,
\]
whereas no analogous coordinatewise update exists for $\V(a+b)$. Thus the clock hands provide the directly updated dynamical variables, while the valuation vector is the sufficient static multiplicative representation.
\end{remark}

\section{From integers to rational numbers}

Passing from positive integers to positive rationals extends the derived valuation coordinates rather than the elapsed-cycle minutes counters. Valuations become signed integers, while a seconds residue remains finite and is defined only when the denominator is invertible modulo the relevant prime.

\subsection{Signed valuation vectors from integer pairs}

The extension to positive rationals can be carried out entirely in the clockwork's additive prime-vector logic, without taking division as a primitive operation. This is the clockwork reformulation of the prime-vector argument developed in~\cite{EmmerichPrimeVectors2025}. Let $(a,b)$ and $(c,d)$, with positive integer entries, be equivalent when
\[
ad=bc,
\]
and denote the equivalence class of $(a,b)$ by $[a,b]$. Define
\begin{equation}
\boxed{\V([a,b])=\V(a)-\V(b).}
\label{eq:rational-pair-vector}
\end{equation}
If $(a,b)\sim(c,d)$, then $ad=bc$. Primewise additivity on integers gives
\[
\V(a)+\V(d)=\V(ad)=\V(bc)=\V(b)+\V(c),
\]
so $\V([a,b])=\V([c,d])$. Thus the signed valuation vector is well defined.

The decisive uniqueness step is additive. Suppose two pair classes have the same signed valuation vector,
\[
\V([a,b])=\V([c,d]).
\]
Then
\[
\V(a)-\V(b)=\V(c)-\V(d),
\]
and adding the same valuation vectors to both sides yields
\[
\V(a)+\V(d)=\V(c)+\V(b).
\]
Integer additivity identifies these as the valuation vectors of $ad$ and $cb$. The already proved uniqueness of the integer valuation representation therefore gives $ad=cb$, which is precisely the equivalence relation $[a,b]=[c,d]$. Thus rational uniqueness is obtained by addition and equality in prime-vector space, without quotient or product cancellation.

Surjectivity is equally coordinatewise. Given any finite-support integer vector $\mathbf z=(z_p)_p$, split it into its nonnegative parts,
\[
\mathbf z=\mathbf z^{+}-\mathbf z^{-},
\qquad z_p^{+}=\max(z_p,0),\quad z_p^{-}=\max(-z_p,0).
\]
By the positive-integer representation theorem there are unique positive integers $A$ and $B$ with $\V(A)=\mathbf z^{+}$ and $\V(B)=\mathbf z^{-}$. Then $\V([A,B])=\mathbf z$. Hence
\begin{equation}
\boxed{\Q_{>0}^{\times}\cong\bigoplus_{p\ \mathrm{prime}}\Z,}
\label{eq:rational-clock-bijection}
\end{equation}
and multiplication of positive rationals corresponds exactly to addition of signed valuation vectors.

Thus positive rationals, like positive integers, are uniquely represented by the valuation coordinates. If one observes only these coordinates during multiplication, the clock hands are hidden variables of the multiplicative projection. They remain essential to the direct unit-step clockwork on integers. Because the signed vector determines the rational number, the corresponding finite seconds residues remain reconstructible wherever they are defined.

\begin{remark}[What is, and is not, being claimed]
The theorem is classical. The prime-clock zero-product lemma carries the same mathematical content as Euclid's prime-product property, while the valuation readouts convert that local fact into additive prime coordinates. Alternative proofs of the fundamental theorem of arithmetic have a substantial history~\cite{Dawson2015}. No priority claim is made for this formulation. Its role here is expository: the uniqueness step is carried out inside the prime clockwork by equality and subtraction of valuation vectors, rather than by reverting to ordinary product cancellation.
\end{remark}

\subsection{Backward and rational-step motion of the seconds counters}

Autonomy makes every finite seconds counter reversible. The update $x\mapsto x+1$ on $\mathcal C_p$ has inverse $x\mapsto x-1$.

The seconds counters can also be parameterized by rational arithmetic steps. Let
\[
q=\frac ab\in\Q,\qquad \gcd(a,b)=1.
\]
If $p\nmid b$, then $b$ is invertible modulo $p$, and one step of size $q$ on the $p$-seconds counter is the permutation
\begin{equation}
T_{p,q}(x)=x+a\,b^{-1}\pmod p.
\label{eq:rational-step}
\end{equation}
Thus an autonomous prime clock can run forward, backward, or through an admissible rational arithmetic step while its seconds counter remains in the finite state space $\mathbb Z/p\mathbb Z$.

The same construction assigns residue states to many positive rationals. For
\[
x=\frac ab>0,\qquad \gcd(a,b)=1,
\]
with $p\nmid b$, define
\begin{equation}
R_p(x)=a\,b^{-1}\pmod p.
\label{eq:rational-residue}
\end{equation}
For example, on the $7$-clock,
\[
\frac53\equiv5\cdot3^{-1}\equiv4\pmod7.
\]
The restriction $p\nmid b$ is essential. The rational number $1/2$ has a residue on every odd prime clock but not on the $2$-clock. Whenever the relevant residues exist, the seconds state also carries addition locally:
\[
R_p(x+y)=R_p(x)+R_p(y)\pmod p.
\]
Thus the same hidden seconds variables that support successor dynamics on integers continue to carry additive information for rationals on every admissible prime clock.

The valuation coordinates record the same obstruction from the multiplicative side. If $x=a/b$ is reduced, then
\[
\vp(x)=\vp(a)-\vp(b),
\]
and therefore
\begin{equation}
\vp(x)<0\quad\Longleftrightarrow\quad p\mid b.
\label{eq:negative-missing}
\end{equation}
This is exactly the condition under which the finite $p$-residue is unavailable.

\begin{remark}[Two meanings of a rational movement]
Rational phase motion concerns the finite seconds counter and is a modular permutation available when the denominator is prime to $p$. Rational valuation levels, introduced later, are different: they interpolate the multiplicative exponent and allow values such as $1/2$ or $2/3$. They are not elapsed-cycle minutes readings.
\end{remark}

\subsection{Valuation vectors for rational numbers}

For positive integers the valuation readout attached to prime $p$ is
\begin{equation}
V_p(n)=\nu_p(n)=\max\{k\ge0:p^k\mid n\}
       =\sum_{k\ge1}\mathbf{1}_{\{p^k\mid n\}}.
\end{equation}
Collect these readouts into
\[
\V(n)=(\nu_2(n),\nu_3(n),\nu_5(n),\ldots).
\]
For $n\in\N_{>0}$ this is the vector of nonnegative integer valuation readings of the prime clockwork and, by unique factorization, a unique representation of $n$.

For positive rationals, define $\nu_p(a/b)=\nu_p(a)-\nu_p(b)$. The valuation coordinates then take signed integer values, and unique factorization gives
\begin{equation}
\boxed{x=\prod_p p^{\nu_p(x)}}
\label{eq:reconstruction}
\end{equation}
with a unique finite-support vector. In particular,
\begin{equation}
\V(xy)=\V(x)+\V(y),\qquad \V(x^{-1})=-\V(x).
\label{eq:linear}
\end{equation}
This is coordinatewise independence in multiplicative form: for each fixed prime $p$, the valuation changes independently of all other prime coordinates.

Thus
\[
\N_{>0}\cong\Z_{\ge0}^{(\Pset)},\qquad
\Q_{>0}^{\times}\cong\Z^{(\Pset)}.
\]
For instance,
\[
360=2^3\,3^2\,5\longleftrightarrow(3,2,1,0,\ldots),
\]
while
\[
\frac{12}{35}=2^2\,3\,5^{-1}7^{-1}\longleftrightarrow(2,1,-1,-1,0,\ldots).
\]

\begin{remark}[Clock hands versus valuation coordinates]
The minutes counter $M_p$ belongs to the unit-step clock and counts completed $p$-cycles; the valuation $V_p$ is its history-dependent multiplicative readout. Under multiplication the valuation coordinates are independent, while under $+1$ the two clock hands update directly by carry. Keeping these roles separate prevents elapsed cycles from being confused with prime-exponent depth.
\end{remark}

\section{Algebraic numbers}

The valuation coordinates permit one further elementary extension. Allowing rational rather than integer exponent levels reaches a nontrivial class of positive algebraic irrationalities. This is an extension of the multiplicative coordinates, not a continuous interpolation of either clock hand.

\subsection{Rational valuation levels}

For positive integers the valuation coordinates take nonnegative integer values, and for positive rationals they take signed integer values. A further interpolation is obtained by allowing rational rather than integer levels in these coordinates. These rational valuation levels are algebraically rational prime exponents; they are not states of the unit-step clock hands. This already reaches certain positive irrational algebraic numbers, although not all of them. Define
\begin{equation}
\G=
\left\{\prod_p p^{q_p}:q_p\in\Q,\ q_p=0\text{ for all but finitely many }p\right\}
\subset\mathbb R_{>0},
\label{eq:G}
\end{equation}
where the positive real value of each rational power is used. For $x\in\G$, write
\[
\VQ(x)=(q_2,q_3,q_5,\ldots).
\]
Then
\begin{equation}
\VQ(xy)=\VQ(x)+\VQ(y),
\qquad
\VQ(x^r)=r\VQ(x)\quad(r\in\Q).
\label{eq:fractional-linear}
\end{equation}
Powers scale every valuation coordinate, while roots divide every coordinate.

\begin{proposition}[Exact scope of the rational-exponent representation]
\label{prop:Gcharacterization}
For $x>0$,
\begin{equation}
\boxed{x\in\G\iff x^N\in\Q_{>0}\text{ for some integer }N\ge1.}
\label{eq:Gcharacterization}
\end{equation}
Moreover, the rational exponent vector of $x$ is unique.
\end{proposition}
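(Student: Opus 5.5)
The plan is to prove the two directions of \eqref{eq:Gcharacterization} separately and then derive uniqueness from the rational case \eqref{eq:rational-clock-bijection} by clearing denominators.

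For the forward direction, take $x=\prod_p p^{q_p}\in\G$. Only finitely many $q_p$ are nonzero, so their denominators have a common multiple $N\ge1$; one can take $N$ to be the lcm of the denominators. Each $Nq_p$ is then an integer. Positive real rational powers obey $(p^{q})^N=p^{Nq}$, so
\[
x^N=\prod_p p^{Nq_p},
\]
which is a finite product of integer powers of primes and hence lies in $\Q_{>0}$.

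For the converse, suppose $y:=x^N\in\Q_{>0}$. By \eqref{eq:reconstruction} we can write $y=\prod_p p^{\nu_p(y)}$ with finite support. Set
\[
x':=\prod_p p^{\nu_p(y)/N}\in\G .
\]
Then $x'^N=y=x^N$. The map $t\mapsto t^N$ is strictly increasing on $\mathbb R_{>0}$, so it is injective there, and therefore $x=x'\in\G$.

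Uniqueness is the step that needs care, because we must not assume that the real numbers $p^{q}$ are multiplicatively independent; that is exactly what has to be proved. Suppose
\[
\prod_p p^{q_p}=\prod_p p^{q'_p}
\]
with both families of finite support. Choose $N$ clearing all denominators on both sides and raise both sides to the $N$-th power. This gives an equality of positive rationals,
\[
\prod_p p^{Nq_p}=\prod_p p^{Nq'_p},
\]
with integer exponents. The uniqueness part of \eqref{eq:rational-clock-bijection} forces $Nq_p=Nq'_p$ for every $p$, and dividing by $N$ gives $q_p=q'_p$. So $\VQ$ is well defined, and \eqref{eq:fractional-linear} follows.

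The main technical point is being explicit that real rational powers satisfy $(p^{a})^{N}=p^{Na}$ and $(uv)^N=u^Nv^N$, and that $N$-th powers are injective on positive reals. Everything else reduces to the already established integer and rational representations.
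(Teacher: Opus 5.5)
Your proof is correct and follows essentially the same route as the paper. The forward direction clears denominators with a common $N$, the converse takes the positive $N$th root of the prime factorization of $x^N$, and uniqueness comes from raising the equality to the $N$th power and invoking unique factorization of positive rationals. The only differences are cosmetic: you compare two exponent vectors directly instead of reducing to $\prod_p p^{q_p}=1$, and you state explicitly that $t\mapsto t^N$ is injective on $\mathbb R_{>0}$, which the paper leaves implicit.
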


\begin{proof}
If $x=\prod p^{q_p}$, choose $N$ divisible by the finitely many denominators of the $q_p$. Then $x^N$ has integer prime exponents and is a positive rational. Conversely, if $x^N=r\in\Q_{>0}$, write $r=\prod p^{e_p}$ and take the positive $N$th root to obtain $x=\prod p^{e_p/N}$. For uniqueness, if $\prod p^{q_p}=1$, clear all denominators with a common $N$; unique prime factorization of the resulting rational identity forces every $Nq_p$, hence every $q_p$, to vanish.
\end{proof}

Thus
\[
\sqrt2\longleftrightarrow(1/2,0,0,\ldots),
\qquad
\sqrt[3]{12}\longleftrightarrow(2/3,1/3,0,\ldots),
\]
and
\[
\sqrt{\frac35}\longleftrightarrow(0,1/2,-1/2,0,\ldots).
\]
Every positive rational root of a positive rational is represented in this way. Thus the rational-step extension includes a nontrivial class of irrational algebraic numbers, while Section~\ref{sec:natural-boundary} shows that it does not include all algebraic numbers.

\begin{figure}[ht]
\centering
\begin{tikzpicture}[font=\small]
  \draw[ClockBlue!60,thick] (-5.1,0)--(5.1,0);
  \foreach \x/\lab in {-4/{$2^{-1}$},-2/{$2^{-1/2}$},0/{$1$},2/{$2^{1/2}$},4/{$2$}}{
    \draw[ClockBlue!60] (\x,-0.12)--(\x,0.12);
    \fill[ClockGold] (\x,0) circle[radius=1.4mm];
    \node[below=2.5mm] at (\x,0) {\lab};
  }
  \draw[{Latex[length=2.3mm]}-{Latex[length=2.3mm]},ClockGreen,very thick] (0.15,0.72)--(1.85,0.72);
  \node[ClockGreen,above=1mm] at (1.0,0.72) {half-step};
  \node[ClockBlue] at (0,1.55) {$2$-exponent coordinate};
\end{tikzpicture}
\caption{Fractional prime exponents refine the exponent coordinate itself. Adjacent half-levels differ by multiplication by $2^{1/2}$, while the finite residue clocks are not subdivided.}
\label{fig:fractional}
\end{figure}

\subsection{A natural boundary}
\label{sec:natural-boundary}

The class $\G$ is multiplicatively rich, but it is not closed under addition. This is not a technical inconvenience; it is the simplest precise boundary of the prime-exponent representation.

\begin{proposition}
Neither $1+\sqrt2$ nor the golden ratio
\[
\varphi=\frac{1+\sqrt5}{2}
\]
belongs to $\G$.
\end{proposition}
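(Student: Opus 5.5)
The plan is to use the characterization in Proposition~\ref{prop:Gcharacterization}: an element $x\in\G$ has $x^N\in\Q_{>0}$ for some $N\ge1$. So it suffices to show that no positive power of $1+\sqrt2$, and no positive power of $\varphi$, is rational. Both numbers are units whose Galois conjugates have absolute value less than $1$, and the argument will rest on that fact.

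For $\alpha=1+\sqrt2$ I take the conjugate $\bar\alpha=1-\sqrt2$. Then $\alpha\bar\alpha=-1$ and $|\bar\alpha|<1$. Write $\alpha^N=a_N+b_N\sqrt2$ with integers $a_N,b_N$. Expanding binomially, every term is nonnegative and the $k=1$ term is $N\sqrt2$, so $b_N\ge N\ge1$. Because $\sqrt2$ is irrational (shown earlier by the one-coordinate obstruction), $a_N+b_N\sqrt2$ is rational only when $b_N=0$. Hence $\alpha^N\notin\Q$ for every $N\ge1$, and therefore $\alpha\notin\G$.

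For $\varphi$ I do the same thing with $\sqrt5$. Write $\varphi^N=(A_N+B_N\sqrt5)/2^N$, where $A_N+B_N\sqrt5=(1+\sqrt5)^N$. Binomial expansion again gives $B_N\ge N>0$, all terms being nonnegative. Irrationality of $\sqrt5$ follows from the same parity argument used for $\sqrt2$, applied in the $5$-coordinate: the vector $\mathbf e_5$ is not in the even lattice. So $\varphi^N$ is irrational for every $N\ge1$, and $\varphi\notin\G$.

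I expect the only delicate step to be justifying that $a+b\sqrt d$ with $b\ne0$ is irrational. This reduces to the irrationality of $\sqrt d$: if $a+b\sqrt d=r$ were rational, then $\sqrt d=(r-a)/b$ would be rational too. The irrationality of $\sqrt d$ in turn comes from the even-lattice test \eqref{eq:kthpower}. An alternative, more in the spirit of valuation coordinates, would be to note that $\alpha^N\in\Q$ together with $\alpha^N\bar\alpha^N=\pm1$ forces $\alpha^N=\pm1$, which contradicts $\alpha>1$. The binomial route is simpler, so I will use it.
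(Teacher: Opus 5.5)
Your proof is correct, but it takes a different route from the paper's. The paper assumes $\alpha^N=r\in\Q$ and applies the conjugation $\sqrt2\mapsto-\sqrt2$ to get $(1+\sqrt2)^N=(1-\sqrt2)^N$. It refutes this by $|1+\sqrt2|>1>|1-\sqrt2|$, and likewise uses $|\varphi|>1>|\psi|$ with $\psi=-\varphi^{-1}$. You never conjugate. Instead you show by binomial expansion that the $\sqrt d$-coefficient of $(1+\sqrt d)^N$ is at least $N$, and you reduce everything to the irrationality of $\sqrt d$, which you obtain from the even-lattice obstruction.

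Your route buys self-containment. The paper's step ``conjugate a rational identity'' tacitly requires $a+b\sqrt2\mapsto a-b\sqrt2$ to be well defined and to fix $\Q$, and that itself rests on $\sqrt2\notin\Q$. Your version makes explicit the one arithmetic input both proofs share, links it to the valuation coordinates, and needs no estimate on the conjugate. It is also less special than the positivity suggests. Every odd-index term $\binom Nk a^{N-k}b^k d^{(k-1)/2}$ has the sign of $a^{N-1}b$. So the same computation shows that no $a+b\sqrt d$ with $a,b\in\Q$, $ab\ne0$, and $d>0$ a non-square has a rational power. The paper's argument, in turn, gives a criterion that extends to algebraic numbers of any degree: a rational power forces all conjugates to share one absolute value. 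Your computation, by contrast, is tied to the basis $\{1,\sqrt d\}$.

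Two small points. First, the fact you announce at the outset (units with a conjugate inside the unit circle) plays no role in your binomial argument; it matters only for the paper's route. Second, your closing alternative is incomplete as stated. The relation $\alpha^N\bar\alpha^N=\pm1$ alone only gives $\bar\alpha^N=\pm\alpha^{-N}$. You would still need an input such as $\bar\alpha^N=\alpha^N$ (conjugation fixes the rational number $\alpha^N$), which is precisely the paper's step.
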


\begin{proof}
Let $\alpha=1+\sqrt2$. If $\alpha\in\G$, Proposition~\ref{prop:Gcharacterization} gives $\alpha^N\in\Q$ for some $N\ge1$. Applying the conjugation $\sqrt2\mapsto-\sqrt2$ would then give
\[
(1+\sqrt2)^N=(1-\sqrt2)^N.
\]
But $1+\sqrt2>1$ while $|1-\sqrt2|<1$, so the two sides have different absolute values. This is impossible.

For $\varphi$, its conjugate is
\[
\psi=\frac{1-\sqrt5}{2}=-\varphi^{-1}.
\]
If $\varphi^N\in\Q$, conjugation would force $\varphi^N=\psi^N$. Again $|\varphi|>1$ and $|\psi|<1$, a contradiction.
\end{proof}

\begin{remark}[Natural boundary]
This paper stops here deliberately. One can enlarge the ambient algebraic machinery using number fields, prime ideals, units, places, and related constructions, but that requires additional structure. In its elementary form, the second layer of the prime clockwork is the rational extension of the prime-indexed valuation coordinates $\nu_p$. It linearizes multiplication, division, powers, and roots of prime-radical monomials. Ordinary addition is not turned into coordinatewise motion. The failure of $1+\sqrt2$ and $\varphi$ to fit into one rational prime-exponent state is therefore an informative boundary of the representation.
\end{remark}

\section{What the clockwork reveals}

The value of the representation is easiest to assess by comparing the structures that become adjacent in it. The same prime-indexed organization contains recursion, finite-state dynamics, modular phase, multiplicative depth, and counting observables. Standard theorems remain standard; the difference lies in their order of appearance and in the transitions between them.

The prime clockwork can be summarized as a recursively grown array of autonomous two-hand clocks. Their maintained state is
\[
(R_p(n),M_p(n)),
\]
where seconds advance cyclically and each completed $p$-cycle carries one unit to minutes. Starting from the zero state, the common unit tick generates these trajectories, triggers new-clock growth, and produces at every finite stage one grand cycle through the Cartesian product of the seconds state spaces.

The multiplicative reading
\[
x\longmapsto V_p(x)=\nu_p(x)
\]
is the representational layer. At integer times it is generated by the same $p$-clock from a ring, the current minutes address, and local valuation history. For positive integers the finite-support nonnegative vector $\V(x)$ is unique; for positive rationals its signed extension is unique. Under multiplication each coordinate evolves independently by $V_p(xy)=V_p(x)+V_p(y)$. If only the valuation vector is watched, the two clock hands are hidden variables of this multiplicative projection. They remain reconstructible from the represented global state, but maintaining them makes local unit-step evolution immediate and avoids reconstructing and refactorizing the integer at every additive step.

Several consequences become concise in this language. Prime periods are generated growth events rather than an a priori parameter list, and the online construction uses successor, comparison, zero tests, carry, and lookup rather than explicit \texttt{mod} or \texttt{div}. Divisibility is the weak componentwise order on valuation vectors, division is coordinatewise subtraction, gcd and lcm are componentwise minimum and maximum, and perfect powers form sublattices. Euler's totient counts grand-cycle states that avoid the zero alarms of the primes dividing $n$, while B\'ezout combines coordinatewise removal of common valuation content with inverse motion in the residue layer. Finally, rational valuation levels represent exactly the positive reals for which some positive integer power is rational.

None of these statements is intended as a new theorem. The proposal is that their order and common prime-indexed organization may be mathematically and pedagogically useful. The same language passes from prime discovery to finite product dynamics, from product dynamics to modular arithmetic, and from prime exponents to a controlled rational extension. This may provide a modest bridge between elementary number theory, visual reasoning, and discrete dynamical systems.

\section{Further perspectives}

The prime clockwork should not be read as a claim that clocks, congruences, valuations, or the underlying number-theoretic theorems are new. The question is whether their particular synthesis produces a useful expository and proof language. The recursive emergence of prime periods, the proof of the grand cycle before invoking the Chinese remainder theorem, the paired seconds/minutes readings, and the controlled extension from integer to signed and rational valuation coordinates are the aspects on which that claim rests.

Several components of the present exposition were developed previously in the author's \emph{Mathematical Playground} essays.  Prime-vector notation for integers and positive rationals was introduced as a positional representation based on prime exponents~\cite{EmmerichPlaying2024,EmmerichPrimeVectors2025}; the latter essay emphasized the bijection between positive rationals and finite-support integer prime vectors and gave an earlier uniqueness argument.  The recursive addition of prime clocks, long recurring clock patterns, Cartesian-product counting, and a complex-plane visualization were explored informally in ``Grand Cycles of the Primes''~\cite{EmmerichGrandCycles2024}.  A later LISP note implemented online prime detection by autonomous counters using increment and comparison as the arithmetic operations~\cite{EmmerichLisp2025}.  The present manuscript reorganizes these exploratory notes around the self-growing discrete dynamical system. Proposition~\ref{prop:prime-discovery} makes prime emergence explicit, Theorem~\ref{prop:grand-cycle} proves the grand cycle before invoking CRT, equations~\eqref{eq:two-hand-update} and~\eqref{eq:valuation-recursion} separate the autonomous clock motion from its derived exponent readout, and Section~\ref{sec:clock-fta} derives unique factorization from non-repetition of the valuation trajectory.

Clock representations of congruences are standard expository devices. Wilson's introductory number-theory text includes a chapter explicitly organized around ``congruences, clocks, and calendars''~\cite{Wilson2020}. More specifically, Perucca's \emph{Chinese Remainder Clock} gives an analog multi-hand clock and develops the CRT in terms of rotations~\cite{Perucca2017}. The present clock layer is therefore not claimed as a new visualization of the CRT.

Cyclic sieve constructions also organize residue classes. Pritchard's work \emph{Explaining the wheel sieve} develops a mathematical framework for such a cyclic sieve~\cite{Pritchard1982}; Holt and Rudd study cycles of gaps generated through stages of Eratosthenes' sieve and explicitly discuss a dynamical system underlying their recursion~\cite{HoltRudd2014}. These works are closer to sieving and gap structure than to the paired residue/valuation representation used here.

Alternative proofs of the fundamental theorem of arithmetic are themselves a well-developed subject; Dawson surveys several and discusses why alternative proofs may be mathematically useful even when the theorem is classical~\cite{Dawson2015}. Section~\ref{sec:clock-fta} should be read in that spirit. Its local prime-clock product lemma is equivalent in substance to the classical prime-product property, but it is derived from the cycle structure of a prime-period rotation inside a Cartesian product of autonomous prime clocks. The local clock law yields additivity of the valuation readouts. A minimal-repetition argument then descends through a ringing clock's minutes address, cancelling one valuation-coordinate unit and producing an impossible earlier repetition. Reconstruction from the resulting injective trajectory gives unique factorization. No priority claim is made for this formulation; its role is to connect a classical theorem directly to the representation used throughout the paper.

The exponent identities
\[
\nu_p(xy)=\nu_p(x)+\nu_p(y),
\qquad
\nu_p(x/y)=\nu_p(x)-\nu_p(y),
\]
and the coordinate descriptions of divisibility, gcd, lcm, and perfect powers are direct consequences of unique factorization and the standard additive valuation; see, for example, classical number-theory texts such as Apostol~\cite{Apostol1976} and Hardy--Wright~\cite{HardyWright2008}.

The intended distinction of the prime clockwork is one of synthesis, emphasis, and order of presentation. The construction is introduced explicitly as a recursive discrete trajectory: the common unit step drives each clock's autonomous seconds motion and carry into completed-cycle minutes, the absence of a zero alarm creates a new prime period without prior information about the primes, and the resulting growth times are exactly the primes. Each finite seconds stage is then treated as a Cartesian-product dynamical system whose grand cycle is proved before CRT is invoked. At ring times, the minutes reading addresses the clock's earlier valuation history, generating the prime-exponent depth. The valuation rows are shown not to repeat by descent through these minutes addresses, and unique factorization follows by reconstructing the integer time from its row. Signed valuation coordinates turn multiplicative composition and inversion into forward and backward coordinate motion. In that multiplicative projection both clock hands are hidden, but retaining them makes local $+1$ evolution immediate. Rational valuation levels then show how far the prime-indexed representation reaches into algebraic irrationality before ordinary addition leaves the model.

There are several directions one could pursue further. Richer algebraic number fields would require prime ideals, units, and additional local data, and the ordered grand cycles invite comparison with more general finite dynamical systems. A particularly natural question is whether the prime clocks, and the prime clockwork assembled from them, admit a useful continuous counterpart. Appendix~\ref{app:complex} introduces a distinguished unit clock whose positive-real crossings mark the integer events and embeds each discovered prime seconds clock by a complex exponential. In this continuous picture a new prime is signalled when the unit clock reaches an integer event while none of the previously installed prime clocks lies on its positive real reference ray. This is the continuous event-detection counterpart of the discrete growth rule. It remains open here whether the embedding leads to useful continuous analogues of grand cycles, clockwork observables, or related dynamical structures. These extensions are deliberately not developed here. The present paper is intended to isolate the elementary core and its natural boundary.

The contribution is therefore best viewed as representational and organizational rather than as a collection of new number-theoretic theorems. Its intended use is in visual and intuitive number theory, in the search for alternative proofs and explanations, and as a possible entry point from elementary modular arithmetic into dynamical-systems language.

The author hopes that, rather than literally reinventing the wheel, the prime clockwork contributes a useful new perspective from which to view the modular structure of integers, rationals, and algebraic numbers, and to formulate aspects of the corresponding theory in a more intuitive way.

\section*{Acknowledgements}
The author thanks Mutlu \"Ozdemir for discussions on the prime clockwork and related ideas.

\section*{Tool and computational resource disclosure}
Following the Leiden Declaration~\cite{LeidenDeclaration2026}, the author discloses the use of OpenAI's ChatGPT for language editing and as an auxiliary check for possible gaps or inconsistencies in mathematical arguments. It was neither treated as an author nor relied upon as a mathematical authority. The author verified the arguments and references and assumes full responsibility for the manuscript and any remaining errors.

\paragraph{License.}
This manuscript is intended for release under the Creative Commons Attribution 4.0 International license (CC BY 4.0).

\appendix
\section{A compact executable form of the prime clockwork}
\label{app:python}

\begin{figure}[H]
\centering
\begin{minipage}{0.96\textwidth}
\textbf{Python implementation. Two-hand clocks and their valuation readouts.}
\begin{lstlisting}[style=clockpython]
"""Compact division-free prime clockwork."""

def advance(r, m, p):
    return (0, m + 1) if r + 1 == p else (r + 1, m)

def clockwork(N):
    P, seconds, minutes, history = [], {}, {}, {}
    rows = [(1, (), (), ())]
    for n in range(2, N + 1):
        for p in P:
            seconds[p], minutes[p] = advance(
                seconds[p], minutes[p], p)
        if all(seconds[p] != 0 for p in P):
            P.append(n)
            seconds[n], minutes[n], history[n] = 0, 1, {}
        V = {}
        for p in P:
            V[p] = (0 if seconds[p] else
                    1 + history[p].get(minutes[p], 0))
            history[p][n] = V[p]
        residues = tuple((p, seconds[p]) for p in P)
        cycles = tuple((p, minutes[p]) for p in P)
        factors = tuple((p, V[p]) for p in P if V[p])
        rows.append((n, residues, cycles, factors))
    return tuple(P), rows

if __name__ == "__main__": print(clockwork(15))
\end{lstlisting}
\vspace{1mm}
\textbf{Walkthrough.} Each loop is one unit step. Every installed clock advances its seconds hand and carries one unit to its minutes hand on returning to zero. If none rings, the current integer is installed as a new period in state $(0,1)$. At a ring, the minutes reading addresses that clock's own valuation history and~\eqref{eq:valuation-recursion} adds one; away from a ring the valuation is zero. The program contains neither a remainder nor a division operator. Its output generates the primes, all active seconds and minutes readings, and the nonzero valuation coordinates of the factorization.

\vspace{1mm}
\textbf{Common Lisp implementation. The minimal online prime-clock process.}
\begin{lstlisting}[style=clocklisp]
(let ((cs nil) (n 2))
  (labels ((composite-p ()
             (some (lambda (c) (zerop (cdr c))) cs))
           (tick ()
             (dolist (c cs)
               (incf (cdr c))
               (when (= (cdr c) (car c)) (setf (cdr c) 0)))))
    (loop
      (if (composite-p)
          (progn (tick) (incf n))
          (progn (print n) (push (cons n 0) cs)
                 (tick) (incf n))))))
\end{lstlisting}
\textbf{Walkthrough.} \texttt{cs} stores the discovered clocks. \texttt{composite-p} tests for a zero hand, \texttt{tick} advances and resets, and \texttt{push} installs the new clock when none rings~\cite{EmmerichLisp2025}.
\end{minipage}
\caption{Two executable views of the prime clockwork. The Common Lisp source isolates online prime discovery. The Python source advances both clock hands and generates the valuation readout from each clock's minutes address and local history.}
\label{fig:clockwork-source}
\end{figure}

\clearpage
\section{Complex-exponential picture of the seconds counters}
\label{app:complex}

A natural starting point for asking about a continuous counterpart of the prime clocks is the complex exponential. The continuous picture has one distinguished component that is not itself a prime clock: the \emph{unit clock}
\begin{equation}
z_1(\tau)=\exp(2\pi\mathrm{i}\,\tau),\qquad \tau\in\mathbb R.
\label{eq:unit-clock}
\end{equation}
Its crossings of the positive real reference point occur exactly at the integer times. The unit clock therefore supplies the event times at which the recursive prime clockwork inspects the already installed prime clocks.

For each discovered prime $p$, embed the maintained seconds counter continuously by
\begin{equation}
z_p(\tau)=p\exp\!\left(\frac{2\pi\mathrm{i}\,\tau}{p}\right),\qquad \tau\in\mathbb R.
\label{eq:complex-clock}
\end{equation}
At an integer time $n$, the point $z_p(n)$ records the same cyclic state as $R_p(n)$ and
\[
z_p(n)=p\quad\Longleftrightarrow\quad p\mid n.
\]
Thus, for $n>1$, the recursive prime-discovery condition has the continuous event form
\begin{equation}
\begin{aligned}
\text{$n$ is a prime-discovery event}
\quad\Longleftrightarrow\quad & z_1(n)=1,\\
& z_p(n)\ne p\quad\text{for every installed prime }p<n.
\end{aligned}
\label{eq:continuous-prime-event}
\end{equation}
After this event the new $n$-clock is installed at its own positive-real reference point. In other words, the discrete rule ``grow when no clock rings'' becomes an absence-of-synchronization test among autonomous continuous rotations sampled whenever the unit clock completes a turn.

Differentiation gives
\[
z'_p(\tau)=2\pi\mathrm{i}\exp\!\left(\frac{2\pi\mathrm{i}\,\tau}{p}\right),\qquad |z'_p(\tau)|=2\pi.
\]
Thus, if the radius is chosen proportional to the period, all prime-clock points have the same tangential speed while larger prime clocks have lower angular velocity.

Figure~\ref{fig:complex-clockwork} juxtaposes the planar event picture with its lifted simultaneous-state realization.

\begin{figure}[p]
\centering

\resizebox{0.96\textwidth}{!}{%
\begin{tikzpicture}[font=\small,>=Latex,line cap=round,line join=round]
\def\PCPlanarR{3.20}

\newcommand{\PCPlanarPrime}[4]{%
  \pgfmathsetmacro{\PCPR}{\PCPlanarR*(#3)/31}%
  \pgfmathsetmacro{\PCPAng}{360*(#2)/(#3)}%
  \pgfmathtruncatemacro{\PCPRem}{mod(#2,#3)}%
  \ifnum\PCPRem=0
    \fill[ClockBlue!88!black]
      ({(#1)+\PCPR*cos(\PCPAng)},{\PCPR*sin(\PCPAng)})
      circle[radius=.72mm];
  \else
    \filldraw[fill=white,draw=ClockBlue!88!black,line width=.48pt]
      ({(#1)+\PCPR*cos(\PCPAng)},{\PCPR*sin(\PCPAng)})
      circle[radius=.62mm];
  \fi
}

\newcommand{\PCPlanarPanel}[4]{%
  \node[font=\bfseries] at (#1,3.74) {#4};
  \draw[-{Latex[length=1.8mm]},ClockBlue!55]
    ({(#1)-3.48},0)--({(#1)+3.55},0) node[right] {$\Re z$};
  \draw[-{Latex[length=1.8mm]},ClockBlue!55]
    (#1,-3.45)--(#1,3.45) node[below right] {$\Im z$};
  \draw[ClockGold!72,line width=.68pt] (#1,0)--({(#1)+3.33},0);
  \draw[ClockGold,line width=1.05pt]
    (#1,0) circle[radius={\PCPlanarR/31}];
  \foreach \p in {2,3,5,7,11,13,17,19,23,29}{%
    \draw[ClockBlue!24,line width=.36pt]
      (#1,0) circle[radius={\PCPlanarR*\p/31}];
  }
  \ifnum#3=1
    \draw[ClockBlue!48,line width=.60pt]
      (#1,0) circle[radius=\PCPlanarR];
  \fi
  \fill[ClockGold] ({(#1)+\PCPlanarR/31},0) circle[radius=.78mm];
  \foreach \p in {2,3,5,7,11,13,17,19,23,29}{%
    \PCPlanarPrime{#1}{#2}{\p}{0}%
  }
  \ifnum#3=1 \PCPlanarPrime{#1}{#2}{31}{1}\fi
  \fill[Ink] (#1,0) circle[radius=.48mm];
}

\PCPlanarPanel{-4.15}{30}{0}{\textup{(a)} $n=30$}
\PCPlanarPanel{ 4.15}{31}{1}{\textup{(b)} $n=31$}
\end{tikzpicture}%
}

\vspace{1mm}

\resizebox{0.96\textwidth}{!}{%
\begin{tikzpicture}[
  line cap=round,
  line join=round,
  >={Latex[length=2.0mm,width=1.25mm]},
  every node/.style={font=\sffamily\small},
  prime label/.style={font=\sffamily\scriptsize,inner sep=1pt}
]
\def\PCH{7.55}
\def\PCR{3.18}
\def\PCViewX{0.48}
\def\PCViewY{0.27}

\newcommand{\PCPoint}[5]{%
  \pgfmathsetmacro{\PCRad}{\PCR*(#3)/31}%
  \pgfmathsetmacro{\PCX}{(#1)+\PCRad*cos(#4)+\PCViewX*\PCRad*sin(#4)}%
  \pgfmathsetmacro{\PCY}{(#2)+\PCH*(#3)/31+\PCViewY*\PCRad*sin(#4)}%
  #5%
}

\newcommand{\PCBaseAxes}[2]{%
  \draw[<->,black!58,line width=.48pt]
    ({(#1)-1.12},{#2}) -- ({(#1)+1.28},{#2})
    node[prime label,right,text=black!72] {$\Re$};
  \draw[<->,black!58,line width=.48pt]
    ({(#1)-1.34*\PCViewX},{(#2)-1.34*\PCViewY})
    -- ({(#1)+1.34*\PCViewX},{(#2)+1.34*\PCViewY})
    node[prime label,above right,text=black!72] {$\Im$};
  \fill[black!72] (#1,#2) circle[radius=.75pt];
}

\newcommand{\PCConeSurface}[2]{%
  \foreach \a [evaluate=\a as \b using \a+15] in {0,15,...,345}{%
    \pgfmathtruncatemacro{\PCShade}{mod(\a/15,2)}%
    \PCPoint{#1}{#2}{31}{\a}{\coordinate (PCA) at (\PCX,\PCY);}%
    \PCPoint{#1}{#2}{31}{\b}{\coordinate (PCB) at (\PCX,\PCY);}%
    \ifnum\PCShade=0
      \fill[black,opacity=.022] (#1,#2)--(PCA)--(PCB)--cycle;
    \else
      \fill[black,opacity=.040] (#1,#2)--(PCA)--(PCB)--cycle;
    \fi
  }
  \foreach \a in {0,45,...,315}{%
    \PCPoint{#1}{#2}{31}{\a}{%
      \draw[black,opacity=.12,line width=.28pt] (#1,#2)--(\PCX,\PCY);}%
  }
  \PCPoint{#1}{#2}{31}{154}{\coordinate (PCL) at (\PCX,\PCY);}%
  \PCPoint{#1}{#2}{31}{334}{\coordinate (PCRight) at (\PCX,\PCY);}%
  \draw[black!62,line width=.52pt] (#1,#2)--(PCL);
  \draw[black!62,line width=.52pt] (#1,#2)--(PCRight);
}

\newcommand{\PCPrimeRing}[4][]{%
  \draw[black!48,densely dashed,line width=.34pt,#1,
        domain=0:180,samples=55,variable=\a]
    plot ({(#2)+\PCR*(#4)/31*cos(\a)+\PCViewX*\PCR*(#4)/31*sin(\a)},
          {(#3)+\PCH*(#4)/31+\PCViewY*\PCR*(#4)/31*sin(\a)});
  \draw[black!82,line width=.50pt,#1,
        domain=180:360,samples=55,variable=\a]
    plot ({(#2)+\PCR*(#4)/31*cos(\a)+\PCViewX*\PCR*(#4)/31*sin(\a)},
          {(#3)+\PCH*(#4)/31+\PCViewY*\PCR*(#4)/31*sin(\a)});
}

\newcommand{\PCTopRim}[2]{%
  \draw[black!25,densely dashed,line width=.32pt,
        domain=0:360,samples=90,variable=\a]
    plot ({(#1)+\PCR*cos(\a)+\PCViewX*\PCR*sin(\a)},
          {(#2)+\PCH+\PCViewY*\PCR*sin(\a)});
}

\newcommand{\PCStateRibbon}[6]{%
  \foreach \q [evaluate=\q as \qq using \q+.5] in {2,2.5,...,30.5}{%
    \PCPoint{#1}{#2}{\q}{360*(#3)/\q}{\coordinate (PCRA) at (\PCX,\PCY);}%
    \PCPoint{#1}{#2}{\q}{360*(#4)/\q}{\coordinate (PCRB) at (\PCX,\PCY);}%
    \PCPoint{#1}{#2}{\qq}{360*(#4)/\qq}{\coordinate (PCRC) at (\PCX,\PCY);}%
    \PCPoint{#1}{#2}{\qq}{360*(#3)/\qq}{\coordinate (PCRD) at (\PCX,\PCY);}%
    \fill[#5!50!#6,opacity=.105] (PCRA)--(PCRB)--(PCRC)--(PCRD)--cycle;
  }
  \draw[#5,opacity=.36,line width=.42pt,
        domain=2:31,samples=260,variable=\q,smooth]
    plot ({(#1)+\PCR*\q/31*cos(360*(#3)/\q)
                 +\PCViewX*\PCR*\q/31*sin(360*(#3)/\q)},
          {(#2)+\PCH*\q/31
                 +\PCViewY*\PCR*\q/31*sin(360*(#3)/\q)});
}

\newcommand{\PCStateCurve}[4]{%
  \draw[#4!72!black,line width=.90pt,
        domain=2:31,samples=310,variable=\q,smooth]
    plot ({(#1)+\PCR*\q/31*cos(360*(#3)/\q)
                 +\PCViewX*\PCR*\q/31*sin(360*(#3)/\q)},
          {(#2)+\PCH*\q/31
                 +\PCViewY*\PCR*\q/31*sin(360*(#3)/\q)});
}

\newcommand{\PCStateBead}[5]{%
  \pgfmathsetmacro{\PCAng}{360*(#3)/(#4)}%
  \PCPoint{#1}{#2}{#4}{\PCAng}{%
    \ifnum#5=1
      \fill[ClockBlue!88!black] (\PCX,\PCY) circle[radius=1.55pt];
    \else
      \filldraw[fill=white,draw=ClockBlue!88!black,line width=.52pt]
        (\PCX,\PCY) circle[radius=1.18pt];
    \fi
  }%
}

\newcommand{\PCHeightTick}[3]{%
  \pgfmathsetmacro{\PCTickY}{(#2)+\PCH*(#3)/31}%
  \draw[black!60,line width=.35pt]
    ({(#1)-.075},\PCTickY)--({(#1)+.075},\PCTickY);
  \node[prime label,anchor=east,text=black!78]
    at ({(#1)-.105},\PCTickY) {$#3$};
}

\newcommand{\PCPanel}[7]{%
  \PCBaseAxes{#1}{#2}
  \PCConeSurface{#1}{#2}
  \pgfmathtruncatemacro{\PCPrevious}{#3-1}%
  \PCStateRibbon{#1}{#2}{\PCPrevious}{#3}{#6}{#7}
  \PCTopRim{#1}{#2}
  \foreach \p in {2,3,5,7,11,13,17,19,23,29}{\PCPrimeRing{#1}{#2}{\p}}
  \ifnum#5=1 \PCPrimeRing[line width=.82pt]{#1}{#2}{31}\fi
  \draw[->,black!75,line width=.58pt]
    (#1,{(#2)-.12})--(#1,{(#2)+\PCH+.78}) node[above] {$h$};
  \foreach \p in {2,3,5,7,11,13,17,19,23,29,31}{\PCHeightTick{#1}{#2}{\p}}
  \PCPoint{#1}{#2}{31}{0}{\coordinate (PCG) at (\PCX,\PCY);}%
  \draw[black,line width=.90pt] (#1,#2)--(PCG);
  \PCStateCurve{#1}{#2}{#3}{#7}
  \node[font=\bfseries\normalsize,anchor=south]
    at ({#1},{(#2)+\PCH+1.43}) {#4};
}

\PCPanel{-5.05}{0}{30}{\textup{(c)} $n=30$}{0}{StepBlue}{StepViolet}
\PCStateBead{-5.05}{0}{30}{2}{1}
\PCStateBead{-5.05}{0}{30}{3}{1}
\PCStateBead{-5.05}{0}{30}{5}{1}
\foreach \p in {7,11,13,17,19,23,29}{\PCStateBead{-5.05}{0}{30}{\p}{0}}

\PCPanel{5.05}{0}{31}{\textup{(d)} $n=31$}{1}{StepViolet}{StepRose}
\foreach \p in {2,3,5,7,11,13,17,19,23,29}{\PCStateBead{5.05}{0}{31}{\p}{0}}
\PCStateBead{5.05}{0}{31}{31}{1}
\PCPoint{5.05}{0}{31}{0}{\coordinate (PCBirth) at (\PCX,\PCY);}%
\draw[ClockBlue!55,line width=.42pt] (PCBirth) circle[radius=3.6pt];
\end{tikzpicture}%
}

\caption{Continuous complex-exponential realization of the maintained seconds layer. \textup{(a)--(b)} The planar views show the integer events $n=30$ and $n=31$.  The orange unit clock marks the integer sampling event.  Filled dark-blue discs lie on the positive-real reference ray and open dark-blue discs lie away from it.  Thus the filled prime discs $p=2,3,5$ in \textup{(a)} record exactly the prime divisors of $30$, whereas in \textup{(b)} no previously installed prime clock rings and the new $p=31$ clock is installed.  \textup{(c)--(d)} For $h>0$, let
$\Gamma_n(h)=h\exp(2\pi\mathrm{i}n/h)$.  The lifted point
$(\Re\Gamma_n(h),\Im\Gamma_n(h),h)$ lies on the transparent ambient cone
$x^2+y^2=h^2$, and its prime-height section satisfies
$\Gamma_n(p)=z_p(n)$.  The faint ribbon swept from $\Gamma_{n-1}$ to
$\Gamma_n$ records one integer step.  At $n=30$ the filled beads at
$p=2,3,5$ lie on the positive-real generator, exactly recording the prime
divisors of $30$.  At $n=31$ no earlier prime bead is aligned, and the new
$p=31$ section is installed.  Open beads show the remaining prime-clock
phases.  The cone and ribbon visualize the
continuous seconds dynamics. Completed turns belong to the minutes counters;
the valuation readouts are not an additional geometric layer.}
\label{fig:complex-clockwork}
\end{figure}

\begin{remark}[Continuous visualization of the maintained cyclic layer]
The complex exponential interpolates only the seconds motion. Its completed turns correspond to the integer minutes reading $M_p$. Neither the cone nor the ribbon displays the valuation $V_p$. At an integer ring, $V_p$ is recovered from the current minutes address and the clock's own valuation history by~\eqref{eq:valuation-recursion}. Thus the continuous picture visualizes the cyclic part of the autonomous two-hand update without conflating elapsed cycles with prime-exponent depth.
\end{remark}

\clearpage
\section{A portrait run for integers 1 through 30}
\label{app:tables}

Table~\ref{tab:exponents-30} shows the factorized output. Each prime column records the valuation readout $V_p(n)=\nu_p(n)$. Soft gray rows are reserved for primes; they mark growth events of the recursion. At such a time no previously installed prime clock rings, so a new clock is created and its prime coordinate appears for the first time.

\begin{table}[h!]
\centering
\caption{Prime-exponent output for $1\le n\le30$. Soft gray rows mark newly encountered primes.}
\label{tab:exponents-30}
\small
\setlength{\tabcolsep}{2.3pt}
\renewcommand{\arraystretch}{1.16}
\begin{tabular}{@{}>{\bfseries\raggedleft\arraybackslash}p{5mm} >{\raggedright\arraybackslash}p{28mm} *{10}{>{\centering\arraybackslash}p{9.0mm}}@{}}
\toprule
$n$ & factorized form & $2$ & $3$ & $5$ & $7$ & $11$ & $13$ & $17$ & $19$ & $23$ & $29$\\
\midrule
1 & $1$ & 0 & 0 & 0 & 0 & 0 & 0 & 0 & 0 & 0 & 0 \\
\rowcolor{PrimeGray}
2 & $2$ & 1 & 0 & 0 & 0 & 0 & 0 & 0 & 0 & 0 & 0 \\
\rowcolor{PrimeGray}
3 & $3$ & 0 & 1 & 0 & 0 & 0 & 0 & 0 & 0 & 0 & 0 \\
4 & $2^{2}$ & 2 & 0 & 0 & 0 & 0 & 0 & 0 & 0 & 0 & 0 \\
\rowcolor{PrimeGray}
5 & $5$ & 0 & 0 & 1 & 0 & 0 & 0 & 0 & 0 & 0 & 0 \\
6 & $2\cdot 3$ & 1 & 1 & 0 & 0 & 0 & 0 & 0 & 0 & 0 & 0 \\
\rowcolor{PrimeGray}
7 & $7$ & 0 & 0 & 0 & 1 & 0 & 0 & 0 & 0 & 0 & 0 \\
8 & $2^{3}$ & 3 & 0 & 0 & 0 & 0 & 0 & 0 & 0 & 0 & 0 \\
9 & $3^{2}$ & 0 & 2 & 0 & 0 & 0 & 0 & 0 & 0 & 0 & 0 \\
10 & $2\cdot 5$ & 1 & 0 & 1 & 0 & 0 & 0 & 0 & 0 & 0 & 0 \\
\rowcolor{PrimeGray}
11 & $11$ & 0 & 0 & 0 & 0 & 1 & 0 & 0 & 0 & 0 & 0 \\
12 & $2^{2}\cdot 3$ & 2 & 1 & 0 & 0 & 0 & 0 & 0 & 0 & 0 & 0 \\
\rowcolor{PrimeGray}
13 & $13$ & 0 & 0 & 0 & 0 & 0 & 1 & 0 & 0 & 0 & 0 \\
14 & $2\cdot 7$ & 1 & 0 & 0 & 1 & 0 & 0 & 0 & 0 & 0 & 0 \\
15 & $3\cdot 5$ & 0 & 1 & 1 & 0 & 0 & 0 & 0 & 0 & 0 & 0 \\
16 & $2^{4}$ & 4 & 0 & 0 & 0 & 0 & 0 & 0 & 0 & 0 & 0 \\
\rowcolor{PrimeGray}
17 & $17$ & 0 & 0 & 0 & 0 & 0 & 0 & 1 & 0 & 0 & 0 \\
18 & $2\cdot 3^{2}$ & 1 & 2 & 0 & 0 & 0 & 0 & 0 & 0 & 0 & 0 \\
\rowcolor{PrimeGray}
19 & $19$ & 0 & 0 & 0 & 0 & 0 & 0 & 0 & 1 & 0 & 0 \\
20 & $2^{2}\cdot 5$ & 2 & 0 & 1 & 0 & 0 & 0 & 0 & 0 & 0 & 0 \\
21 & $3\cdot 7$ & 0 & 1 & 0 & 1 & 0 & 0 & 0 & 0 & 0 & 0 \\
22 & $2\cdot 11$ & 1 & 0 & 0 & 0 & 1 & 0 & 0 & 0 & 0 & 0 \\
\rowcolor{PrimeGray}
23 & $23$ & 0 & 0 & 0 & 0 & 0 & 0 & 0 & 0 & 1 & 0 \\
24 & $2^{3}\cdot 3$ & 3 & 1 & 0 & 0 & 0 & 0 & 0 & 0 & 0 & 0 \\
25 & $5^{2}$ & 0 & 0 & 2 & 0 & 0 & 0 & 0 & 0 & 0 & 0 \\
26 & $2\cdot 13$ & 1 & 0 & 0 & 0 & 0 & 1 & 0 & 0 & 0 & 0 \\
27 & $3^{3}$ & 0 & 3 & 0 & 0 & 0 & 0 & 0 & 0 & 0 & 0 \\
28 & $2^{2}\cdot 7$ & 2 & 0 & 0 & 1 & 0 & 0 & 0 & 0 & 0 & 0 \\
\rowcolor{PrimeGray}
29 & $29$ & 0 & 0 & 0 & 0 & 0 & 0 & 0 & 0 & 0 & 1 \\
30 & $2\cdot 3\cdot 5$ & 1 & 1 & 1 & 0 & 0 & 0 & 0 & 0 & 0 & 0 \\
\bottomrule
\end{tabular}
\end{table}

\clearpage
Table~\ref{tab:residues-30} gives the simultaneous residue-clock state for the same run.  For compact comparison, all prime columns through $29$ are displayed in every row, even before the corresponding clock would have been installed by the dynamic recursion. At time $n$, only columns with $p\le n$ are active. Reading across an active part of a row gives the joint state at time $n$; reading down a column shows one autonomous prime clock. Every soft gray row is a growth event. Among the clocks active just before that row, none rings, and the newly created $p=n$ clock begins at zero.

\begin{table}[h!]
\centering
\caption{Prime-residue states for $1\le n\le30$. A zero is a ring of that prime clock; soft gray rows mark newly encountered primes.}
\label{tab:residues-30}
\small
\setlength{\tabcolsep}{2.0pt}
\renewcommand{\arraystretch}{1.16}
\begin{tabular}{@{}>{\bfseries\raggedleft\arraybackslash}p{6mm} *{10}{>{\centering\arraybackslash}p{10.8mm}}@{}}
\toprule
$n$ & $R_2$ & $R_3$ & $R_5$ & $R_7$ & $R_{11}$ & $R_{13}$ & $R_{17}$ & $R_{19}$ & $R_{23}$ & $R_{29}$\\
\midrule
1 & 1 & 1 & 1 & 1 & 1 & 1 & 1 & 1 & 1 & 1 \\
\rowcolor{PrimeGray}
2 & 0 & 2 & 2 & 2 & 2 & 2 & 2 & 2 & 2 & 2 \\
\rowcolor{PrimeGray}
3 & 1 & 0 & 3 & 3 & 3 & 3 & 3 & 3 & 3 & 3 \\
4 & 0 & 1 & 4 & 4 & 4 & 4 & 4 & 4 & 4 & 4 \\
\rowcolor{PrimeGray}
5 & 1 & 2 & 0 & 5 & 5 & 5 & 5 & 5 & 5 & 5 \\
6 & 0 & 0 & 1 & 6 & 6 & 6 & 6 & 6 & 6 & 6 \\
\rowcolor{PrimeGray}
7 & 1 & 1 & 2 & 0 & 7 & 7 & 7 & 7 & 7 & 7 \\
8 & 0 & 2 & 3 & 1 & 8 & 8 & 8 & 8 & 8 & 8 \\
9 & 1 & 0 & 4 & 2 & 9 & 9 & 9 & 9 & 9 & 9 \\
10 & 0 & 1 & 0 & 3 & 10 & 10 & 10 & 10 & 10 & 10 \\
\rowcolor{PrimeGray}
11 & 1 & 2 & 1 & 4 & 0 & 11 & 11 & 11 & 11 & 11 \\
12 & 0 & 0 & 2 & 5 & 1 & 12 & 12 & 12 & 12 & 12 \\
\rowcolor{PrimeGray}
13 & 1 & 1 & 3 & 6 & 2 & 0 & 13 & 13 & 13 & 13 \\
14 & 0 & 2 & 4 & 0 & 3 & 1 & 14 & 14 & 14 & 14 \\
15 & 1 & 0 & 0 & 1 & 4 & 2 & 15 & 15 & 15 & 15 \\
16 & 0 & 1 & 1 & 2 & 5 & 3 & 16 & 16 & 16 & 16 \\
\rowcolor{PrimeGray}
17 & 1 & 2 & 2 & 3 & 6 & 4 & 0 & 17 & 17 & 17 \\
18 & 0 & 0 & 3 & 4 & 7 & 5 & 1 & 18 & 18 & 18 \\
\rowcolor{PrimeGray}
19 & 1 & 1 & 4 & 5 & 8 & 6 & 2 & 0 & 19 & 19 \\
20 & 0 & 2 & 0 & 6 & 9 & 7 & 3 & 1 & 20 & 20 \\
21 & 1 & 0 & 1 & 0 & 10 & 8 & 4 & 2 & 21 & 21 \\
22 & 0 & 1 & 2 & 1 & 0 & 9 & 5 & 3 & 22 & 22 \\
\rowcolor{PrimeGray}
23 & 1 & 2 & 3 & 2 & 1 & 10 & 6 & 4 & 0 & 23 \\
24 & 0 & 0 & 4 & 3 & 2 & 11 & 7 & 5 & 1 & 24 \\
25 & 1 & 1 & 0 & 4 & 3 & 12 & 8 & 6 & 2 & 25 \\
26 & 0 & 2 & 1 & 5 & 4 & 0 & 9 & 7 & 3 & 26 \\
27 & 1 & 0 & 2 & 6 & 5 & 1 & 10 & 8 & 4 & 27 \\
28 & 0 & 1 & 3 & 0 & 6 & 2 & 11 & 9 & 5 & 28 \\
\rowcolor{PrimeGray}
29 & 1 & 2 & 4 & 1 & 7 & 3 & 12 & 10 & 6 & 0 \\
30 & 0 & 0 & 0 & 2 & 8 & 4 & 13 & 11 & 7 & 1 \\
\bottomrule
\end{tabular}
\end{table}

\end{document}